\documentclass[twoside,10pt]{article}

\usepackage[T1]{fontenc}
\usepackage[utf8]{inputenc}
\usepackage[helvratio=0.88, tighter]{newtxtext}
\usepackage{microtype}

\usepackage{geometry}
\usepackage{amsmath,amsthm}
\usepackage{amssymb}
\usepackage{color,tikz,siunitx,enumerate,xfrac,dsfont,bropd,etoolbox,xparse,pgffor}
\usetikzlibrary{arrows.meta,calc,decorations.pathreplacing,positioning}

\newtheoremstyle{plain}{6.5pt}{6.5pt}{\itshape}{}{\bfseries}{.}{.5em}{}
\newtheoremstyle{definition}{6.5pt}{6.5pt}{\normalfont}{}{\bfseries}{.}{.5em}{}

\theoremstyle{plain}
\newtheorem{theorem}{Theorem}[section]
\newtheorem{lemma}[theorem]{Lemma}
\newtheorem{corollary}[theorem]{Corollary}
\newtheorem{proposition}[theorem]{Proposition}

\theoremstyle{definition}
\newtheorem{definition}[theorem]{Definition}
\newtheorem{assumption}[theorem]{Assumption}
\newtheorem{remark}[theorem]{Remark}
\newtheorem{example}[theorem]{Example}

\newtheorem{innercustomgeneric}{\customgenericname}
\providecommand{\customgenericname}{}
\newcommand{\newcustomtheorem}[2]{%
    \newenvironment{#1}[1]
    {%
        \renewcommand\customgenericname{#2}%
        \renewcommand\theinnercustomgeneric{##1}%
        \innercustomgeneric
    }
    {\endinnercustomgeneric}
}
\newcustomtheorem{customthm}{Theorem}

\numberwithin{equation}{section}

\makeatletter
\newbox\emsqedbox
\AtBeginDocument{%
    \setbox\@tempboxa\hbox{x}%
    \setbox\emsqedbox\hbox{%
        \vrule\@width\ht\@tempboxa
              \@height\ht\@tempboxa
              \@depth\z@
    }%
}
\renewcommand*\qedsymbol{%
    \relax
    \ifmmode
        \copy\emsqedbox
    \else
        \unhcopy\emsqedbox
    \fi
}
\makeatother

\usepackage[shortlabels]{enumitem}
\setlist{
    font=\upshape,
    itemsep=0.25\baselineskip,
    leftmargin=\parindent,
    parsep=0pt,
    topsep=0.25\baselineskip
}

\makeatletter
\renewcommand*\@seccntformat[1]{\csname the#1\endcsname.\enskip}

\renewcommand*\section{%
    \@startsection{section}{1}{\z@}%
        {-13pt \@plus0pt \@minus-3.25pt}%
        {13pt \@plus0pt \@minus3.25pt}%
        {\raggedright\normalfont\large\bfseries\boldmath}%
}

\renewcommand*\subsection{%
    \@startsection{subsection}{2}{\z@}%
        {-13pt \@plus0pt \@minus-3.25pt}%
        {6.5pt}%
        {\raggedright\normalfont\bfseries\boldmath}%
}
\makeatother

\usepackage[pdfa, colorlinks, allcolors=blue]{hyperref}

\makeatletter

\let\keywords\relax
\let\msc\relax
\let\abstract\relax

\gdef\@keywords{}
\gdef\@msc{}

\newcommand{\keywords}[1]{%
    \gdef\@keywords{\textit{Keywords:} #1.}%
}

\newcommand{\msc}[2][]{%
    \gdef\@msc{%
        \textit{Mathematics Subject Classification 2020:} #2%
        \ifx&#1&\else\ (#1)\fi.%
    }%
}

\newcommand{\ems@titlefootnote}[1]{%
    \insert\footins{%
        \reset@font\footnotesize
        \interlinepenalty\interfootnotelinepenalty
        \splittopskip\footnotesep
        \splitmaxdepth\dp\strutbox
        \floatingpenalty\@MM
        \hsize\columnwidth
        \@parboxrestore
        \color@begingroup
            \noindent\strut #1\par
        \color@endgroup
    }%
}

\newbox\emsabstractbox
\newenvironment{abstract}{%
    \global\setbox\emsabstractbox\vbox\bgroup
        \normalcolor\small
        \noindent\strut\textbf{Abstract.}\enskip\ignorespaces
}{%
        \unskip\strut\par
    \egroup
}

\renewcommand{\maketitle}{%
    \thispagestyle{empty}
    \begingroup
        \vspace*{26pt}
        \centering
        {\Large\bfseries\boldmath \@title \par}
        \vspace{8mm}
        {\large \@author \par}
        \vspace{5mm}
    \endgroup
    \ifx\@msc\@empty
    \else
        \ems@titlefootnote{\@msc}%
    \fi
    \ifx\@keywords\@empty
    \else
        \ems@titlefootnote{\@keywords}%
    \fi
    \ifvoid\emsabstractbox
    \else
        \noindent\unvbox\emsabstractbox\par
    \fi
    \vspace{26pt}
}

\newcounter{ems@auth}
\gdef\@author{}

\newcommand{\emsauthor}[3]{%
    \stepcounter{ems@auth}%
    \begingroup
        \def\givenname##1{\expandafter\gdef\csname auth@#1@first\endcsname{##1}}%
        \def\surname##1{\expandafter\gdef\csname auth@#1@last\endcsname{##1}}%
        \def\mrid##1{\expandafter\gdef\csname auth@#1@mrid\endcsname{##1}}%
        \def\zblid##1{\expandafter\gdef\csname auth@#1@zblid\endcsname{##1}}%
        \def\orcid##1{\expandafter\gdef\csname auth@#1@orcid\endcsname{##1}}%
        #2%
    \endgroup
    \ifnum\value{ems@auth}=1
        \expandafter\gdef\expandafter\@author\expandafter{%
            \csname auth@#1@first\endcsname\ %
            \csname auth@#1@last\endcsname
        }%
    \else
        \expandafter\g@addto@macro\expandafter\@author\expandafter{%
            , \csname auth@#1@first\endcsname\ %
            \csname auth@#1@last\endcsname
        }%
    \fi
}

\def\ems@setmeta#1#2#3{%
    \expandafter\gdef\csname affil@#1@#2\endcsname{#3}%
}

\def\ems@makeaffilmacro#1{%
    \expandafter\def\csname #1\endcsname##1{%
        \@ifnextchar\bgroup
            {\csname ems@#1@two\endcsname{##1}}%
            {\csname ems@#1@one\endcsname{##1}}%
    }%
    \expandafter\def\csname ems@#1@two\endcsname##1##2{%
        \ems@setmeta{\current@affil@id @##1}{#1}{##2}%
    }%
    \expandafter\def\csname ems@#1@one\endcsname##1{%
        \ems@setmeta{\current@affil@id @1}{#1}{##1}%
    }%
}

\newcommand{\Emsaffil}[2]{%
    \def\current@affil@id{#1}%
    \begingroup
        \ems@makeaffilmacro{department}%
        \ems@makeaffilmacro{organisation}%
        \ems@makeaffilmacro{rorid}%
        \ems@makeaffilmacro{address}%
        \ems@makeaffilmacro{zip}%
        \ems@makeaffilmacro{city}%
        \ems@makeaffilmacro{country}%
        \ems@makeaffilmacro{affemail}%
        #2%
    \endgroup
}

\def\ems@printid#1#2#3{%
    \ifcsdef{auth@\aid @#1}{%
        \edef\tempval{\csname auth@\aid @#1\endcsname}%
        \expandafter\ifblank\expandafter{\tempval}{}{%
            #2 \href{#3\tempval}{\tempval}\space
            \gdef\hasid{1}%
        }%
    }{}%
}

\AtEndDocument{%
    \par \addvspace{19.5pt}
    \begingroup
    \small \raggedright
    \ifnum\value{ems@auth}>0
        \foreach \i in {1,...,\value{ems@auth}} {%
            \def\aid{\i}%
            {\bfseries
                \csname auth@\aid @first\endcsname\ %
                \csname auth@\aid @last\endcsname
            }\par\nobreak
            \foreach \j in {1,2,3,4} {
                \def\affkey{\aid @\j}%
                \@ifundefined{affil@\affkey @organisation}{}{%
                    \@ifundefined{affil@\affkey @department}{}{%
                        \csname affil@\affkey @department\endcsname, %
                    }%
                    \csname affil@\affkey @organisation\endcsname\par
                    \@ifundefined{affil@\affkey @address}{}{%
                        \csname affil@\affkey @address\endcsname, %
                    }%
                    \@ifundefined{affil@\affkey @zip}{}{%
                        \csname affil@\affkey @zip\endcsname\ %
                    }%
                    \@ifundefined{affil@\affkey @city}{}{%
                        \csname affil@\affkey @city\endcsname, %
                    }%
                    \@ifundefined{affil@\affkey @country}{}{%
                        \csname affil@\affkey @country\endcsname
                    }%
                    \par
                    \ifcsdef{affil@\affkey @affemail}{%
                        \edef\tempemail{\csname affil@\affkey @affemail\endcsname}%
                        \expandafter\ifblank\expandafter{\tempemail}{}{%
                            \href{mailto:\tempemail}{\tempemail}\par
                        }%
                    }{}%
                }%
            }%
            \gdef\hasid{0}%
            \setbox0=\hbox{%
                \ems@printid{zblid}{zbMATH}{https://zbmath.org/authors/}%
                \ems@printid{mrid}{MR}{https://mathscinet.ams.org/mathscinet/MRAuthorID/}%
                \ems@printid{orcid}{ORCID}{https://orcid.org/}%
            }%
            \ifnum\hasid=1
                \noindent Author IDs:\space \unhbox0 \par
            \fi
            \vspace{13pt}%
        }%
    \fi
    \endgroup
}

\makeatother

\makeatletter
\renewcommand*\thebibliography[1]{%
    \section*{References}%
    \list{\@biblabel{\@arabic\c@enumiv}}{%
        \small
        \settowidth\labelwidth{\@biblabel{#1}}%
        \leftmargin \dimexpr\labelwidth+\labelsep\relax
        \itemsep 0pt
        \parsep 0pt
        \usecounter{enumiv}%
        \let\p@enumiv\@empty
        \renewcommand\theenumiv{\@arabic\c@enumiv}%
    }%
    \interlinepenalty \@M
    \emergencystretch 1em
    \sfcode`\.\@m
}
\makeatother

\newcommand*\MR[1]{%
    , \href{https://mathscinet.ams.org/mathscinet-getitem?mr=#1}{MR #1}%
}
\newcommand*\Zbl[1]{%
    , \href{https://zbmath.org/?q=an:#1}{Zbl #1}%
}

\newcommand{\dx}[1]{\mathop{}\!\mathrm{d}#1}
\newcommand{\dH}[1]{\mathop{}\!\mathrm{d}\mathcal{H}^{#1}}

\DeclareMathOperator{\dist}{dist}

\def\XXint#1#2#3{%
    {%
        \setbox0=\hbox{$#1{#2#3}{\int}$}%
        \vcenter{\hbox{$#2#3$}}%
        \kern-.53\wd0
    }%
}

\newcommand{\R}{\mathbb{R}}

\newcommand{\cH}{\ensuremath{\mathcal H}}

\newcommand{\cJ}{\ensuremath{\mathcal J}}

\let\ge\geqslant
\let\le\leqslant
\let\emptyset\varnothing
\newcommand{\mres}{\mathbin{\vrule height 1.6ex depth 0pt width 0.13ex\vrule height 0.13ex depth 0pt width 1.3ex}}

\begin{document}

\title{
    Full-Density Degenerate Stagnation Points for Water Waves with General Vorticity
}


\emsauthor{1}{
    \givenname{Lili}
    \surname{Du}
}{L.~Du}

\emsauthor{2}{
    \givenname{Chunlei}
    \surname{Yang}
}{C.~Yang}

\Emsaffil{1}{
    \department{Department of Mathematics}
    \organisation{Sichuan University}
    \rorid{01a2bcd34}
    \address{No. 24, Wuhou District}
    \zip{}
    \city{Chengdu}
    \country{China}
    \affemail{dulili@scu.edu.cn}
}

\Emsaffil{2}{
    \department{School of Mathematical Sciences}
    \organisation{Shenzhen University}
    \rorid{}
    \address{No. 3688, Nanhai Avenue}
    \zip{10001}
    \city{Shenzhen}
    \country{China}
    \affemail{yangchunlei@szu.edu.cn}
}

\msc[35R35,76D07]{35Q35}
\keywords{free boundary problems; monotonicity formula; frequency formula; waves with vorticity; Stokes corner flow}

\begin{abstract}
    In this paper, we revisit the singular asymptotics of the free surface near stagnation points for two-dimensional traveling gravity water waves with vorticity. We prove the nonexistence of full-density degenerate stagnation points beyond the strict two-sided linear growth regime. 
    
    Our main tools are a modified Weiss-type monotonicity formula and a modified Almgren-type frequency formula. Together, they provide a new approach that completely avoids the use of a Bessel-type differential inequality, which is an essential tool used in the previous literature to prove the nonexistence of full-density degenerate stagnation points (\emph{Ann. I. H. Poincaré-AN}, \textbf{29}, 861--885, 2012).
    
    As consequences, we obtain uniform bounds for frequency-normalized blow-ups in arbitrary dimension and strong convergence in dimension two. As an application, we extend the Stokes conjecture for rotational waves to a broader class of vorticity distributions.
\end{abstract}

\maketitle

\tableofcontents

\medskip

\section{Introduction and main results}

In this paper, we study solutions to the following semilinear Bernoulli-type free boundary problem
\begin{equation}\label{eq.wwp}
    \left\{
        \begin{aligned}
            \Delta u &= -f(u) && \text{ in } \Omega \cap \{u>0\}, \\
            |\nabla u|^2 &= x_n && \text{ on } \Omega \cap \partial\{u>0\}.
        \end{aligned}
    \right.
\end{equation} 
Here \(\Omega\) is an open and bounded domain in \(\R^n\) (\(n\ge 2\)) which has a nonempty intersection with the hyperplane \(\{x_n=0\}\). The free boundary \(\partial\{u>0\}\cap\Omega\) is understood to exist in the upper half-plane \(\{x_n\ge 0\}\). 

In two dimensions, the unknown function \(u\) in \eqref{eq.wwp} can be viewed as the two-dimensional stream function and the problem \eqref{eq.wwp} models the motion of an incompressible inviscid rotational fluid with a free surface under the influence of gravity. The connection between problem \eqref{eq.wwp} and the corresponding Euler equations for incompressible inviscid fluids can be found in \cite{CS04}.

We are particularly interested in solutions with singularities arsing at stagnation points, at which the gradient \(|\nabla u|\) vanishes. It follows from the second equation in \eqref{eq.wwp} that such points lie on the intersection \(\partial\{u>0\} \cap \{x_n=0\}\). In order to study the behavior of the free surface near such points, V\u{a}rv\u{a}ruc\u{a} and Weiss \cite{VW12} studied the blow-up sequence
\begin{equation}\label{eq.rescaling}
    u_r(x) := \frac{u(x^0 + r x)}{r^{3/2}}, \qquad x^0 \in \partial\{u>0\} \cap \{x_n=0\}.
\end{equation}
Let \(f(z)\) be continuous and define its primitive by \(F(z):=\int_0^zf(s)\,\dx{s}\). V\u{a}rv\u{a}ruc\u{a} and Weiss derived the following monotonicity formula.

\begin{theorem}[cf. Theorem 3.4 in \cite{VW12}]\label{thm:monotonicity_formula}
    Let \(u\) be a variational solution (cf. Definition \ref{def:variational_solution}) of \eqref{eq.wwp}, let \(x^0\in \Omega\) be such that \(x_n^0=0\), and let \(\delta:=\dist(x^0,\partial\Omega)/2\). For every \(r\in (0,\delta)\), define
    \[
        I_{x^0,u}(r)=I(r)=\int_{B_r(x^0)}\left(|\nabla u|^2-uf(u)+x_n\,\chi_{\{u>0\}}\right)\,\dx{x}.
    \]
    and 
    \begin{equation}\label{eq.Jr_weiss}
        J_{x^0,u}(r)=J(r)=\int_{\partial B_r(x^0)} u^2\,\dH{n-1}.
    \end{equation}
    Set 
    \begin{equation}\label{eq.Phir}
        M_{x^0,u}(r)=M(r)=r^{-n-1}I(r)-\frac 32r^{-n-2}J(r).
    \end{equation}
    Then for a.e. \(r\in (0,\delta)\), we have
    \begin{equation}\label{mono.Wei.model1}
        M'(r) = 2r^{-n-1}\int_{\partial B_r(x^0)}\left( \nabla u\cdot\nu - \frac{3}{2}\frac{u}{r} \right)^2\,\dH{n-1} + r^{-n-2}K(r),
    \end{equation}
    where 
    \begin{equation}\label{eq.Kr}
    \begin{aligned}
            K_{x^0,u}(r) = K(r) &= r\int_{\partial B_r(x^0)}\left[ 2F(u)-uf(u) \right]\,\dH{n-1} \\ 
            &\quad+ \int_{B_r(x^0)}\left[ (n-2)uf(u)-2nF(u) \right]\,\dx{x}.
        \end{aligned}
    \end{equation}
\end{theorem}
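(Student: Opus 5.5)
Take $x^0=0$ and compute $I'(r)$ and $J'(r)$ directly, then simplify $I(r)$ using two integral identities: the domain-variation identity coming from the definition of a variational solution, and the integration-by-parts identity for $\int|\nabla u|^2$.

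\textbf{Derivatives of the two pieces.} By the coarea formula, for a.e. $r$,
\[
I'(r)=\int_{\partial B_r}\big(|\nabla u|^2-uf(u)+x_n\chi_{\{u>0\}}\big)\,\dH{n-1}.
\]
Writing $J(r)=r^{n-1}\int_{\partial B_1}u(rx)^2$ gives
\[
J'(r)=\frac{n-1}{r}J(r)+2\int_{\partial B_r}u\,\nabla u\cdot\nu .
\]

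\textbf{Domain-variation identity.} Insert the vector field $\eta_\epsilon(x)=\phi_\epsilon(|x|)x$ into the first variation $\int\big(|\nabla u|^2\operatorname{div}\eta-2\nabla u D\eta\nabla u-2F(u)\operatorname{div}\eta+x_n\chi\operatorname{div}\eta+\chi\eta_n\big)=0$. Here $\phi_\epsilon$ approximates $\chi_{[0,r)}$. Note that the equation $\Delta u=-f(u)$ corresponds to the energy $|\nabla u|^2-2F(u)+x_n\chi$; I assume this is the form in the variational-solution definition. Letting $\epsilon\to0$ yields
\[
\int_{B_r}\big(n|\nabla u|^2-2|\nabla u|^2-2nF(u)+(n+1)x_n\chi\big)=r\int_{\partial B_r}\big(|\nabla u|^2-2(\nabla u\cdot\nu)^2-2F(u)+x_n\chi\big).
\]
The coefficient $n+1$ comes from $n\,x_n\chi+\chi x_n$.

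\textbf{Integration-by-parts identity.} Since $u\Delta u=-uf(u)$ in $\{u>0\}$ and $u$ is Lipschitz and vanishes on the free boundary,
\[
\int_{B_r}\big(|\nabla u|^2-uf(u)\big)=\int_{\partial B_r}u\,\nabla u\cdot\nu .
\]
I take this as part of the variational-solution regularity, or justify it by approximating with $\max(u-\epsilon,0)$.

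\textbf{Combining.} Compute
\[
M'(r)=r^{-n-1}I'-(n+1)r^{-n-2}I-\tfrac32 r^{-n-2}J'+\tfrac32(n+2)r^{-n-3}J .
\]
Then proceed as follows.
\begin{itemize}
\item Substitute $rI'$ using the domain-variation identity. This replaces the boundary term $r\int_{\partial B_r}(|\nabla u|^2+x_n\chi)$ by $2r\int_{\partial B_r}(\nabla u\cdot\nu)^2$ plus volume terms.
\item The $x_n\chi$ volume terms cancel exactly against $(n+1)\int x_n\chi$ from $I$. This is why the exponent $n+1$ is forced by the $3/2$ homogeneity.
\item The remaining volume term $-(n+1)\int(|\nabla u|^2-uf)$, together with $(n-2)\int|\nabla u|^2$, becomes $-3\int(|\nabla u|^2-uf)$ plus $f,F$ terms. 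Convert $-3\int(|\nabla u|^2-uf)$ into $-3\int_{\partial B_r}u\,\partial_\nu u$ using the integration-by-parts identity.
\end{itemize}

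\textbf{Completing the square.} Collect the boundary terms $2r\int(\partial_\nu u)^2$, $-3\int u\,\partial_\nu u-3\int u\,\partial_\nu u$, and $\tfrac32\big((n+2)-(n-1)\big)r^{-1}J=\tfrac92 r^{-1}J$. After multiplying by $r^{-n-2}$ these assemble into
\[
2r\int\Big(\partial_\nu u-\tfrac32\,\tfrac ur\Big)^2 .
\]
The leftover terms are $r\int_{\partial B_r}(2F-uf)+\int_{B_r}\big((n-2)uf-2nF\big)$, which is exactly $K(r)$. The sign bookkeeping here must be checked but is routine.

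\textbf{Main obstacle.} The delicate step is justifying the two identities for a merely variational solution at a.e. $r$. This requires passing to the limit in the cutoff $\phi_\epsilon$ in the first variation, which uses $u\in W^{1,2}$ and Lebesgue points of $r\mapsto\int_{\partial B_r}$. It also requires verifying the $u\,\Delta u$ integration by parts across the free boundary, where one uses the weak formulation $\Delta u=-f(u)$ in $\{u>0\}$ and $u=0$ elsewhere.
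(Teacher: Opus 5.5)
Your proposal is correct and is essentially the standard proof of Theorem 3.4 in \cite{VW12}, which the paper cites rather than reproves: it uses the radial domain variation with $\eta_\epsilon(x)=\phi_\epsilon(|x|)x$ together with the energy identity $\int_{B_r}(|\nabla u|^2-uf(u))\,\dx{x}=\int_{\partial B_r}u\,\nabla u\cdot\nu\,\dH{n-1}$, and your bookkeeping checks out (the $x_n\chi_{\{u>0\}}$ volume terms cancel, the $u\,\nabla u\cdot\nu$ terms total $-6\int_{\partial B_r}u\,\nabla u\cdot\nu$ and complete the square with $2r\int_{\partial B_r}(\nabla u\cdot\nu)^2+\frac92 r^{-1}J(r)$, and the remainder is exactly $K(r)$). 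One small correction: Lipschitz continuity is not among the hypotheses, so use your second justification for the energy identity, namely testing $\Delta u=-f(u)$ in $\{u>0\}$ with $\max(u-\kappa,0)\,\zeta(|x|)$ for a radial cutoff $\zeta$ and letting $\kappa\to0^+$, which needs only $u\in W^{1,2}_{\rm loc}\cap C^0$.
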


It should be noted that the remainder \(K(r)\) has no prescribed sign, and hence \(r\mapsto M(r)\) need not be monotone. Nevertheless, under the growth assumption
\begin{equation}\label{eq.growth_assumption}
    |\nabla u|^2\le Cx_n^+\quad\text{ in }\Omega\cap\{u>0\},
\end{equation}
the continuity of \(f\) implies that \(r\mapsto r^{-n-2}K(r)\) is locally an integrable function at any stagnation point. Consequently, it follows from the almost-monotonicity identity \eqref{mono.Wei.model1} that the limit \(M(0^+):=\lim_{r\to 0^+}M(r)\) exists and is finite. Then recall the rescaling defined in \eqref{eq.rescaling}. For every sequence \(r_j\to 0^+\), one may extract a non-relabeled subsequence such that \(u_{r_j}\) converges locally uniformly and strongly in \(W_{\rm loc}^{1,2}(\R^n)\) to a nonnegative \(3/2\)-homogeneous blow-up limit \(u_0\). In two dimensions (\(n=2\)), the explicit form of \(u_0\) can be fully derived, and the stagnation points are classified as either \emph{non-degenerate} or \emph{degenerate}. More precisely, if \(x^0\) is a non-degenerate point, then \(u_0\not\equiv 0\) and 
\[
    \{u_0>0\}=\bigl\{
        (\rho,\theta):\rho>0,\quad \tfrac{\pi}6<\theta<\tfrac{5\pi}{6}
    \bigr\}.
\]
Thus, the positivity set of \(u_0\) is a symmetric cone with opening angle \(2\pi/3\), centered on the positive \(x_2\)-axis. This homogeneous profile is known as the \emph{Stokes corner} when the free surface is assumed to be a continuously injective curve (see Figure \ref{fig.stokes_corner}).

\begin{figure}[!htbp]
    \centering
    \begin{tikzpicture}[
        x=.93cm,
        y=.93cm,
        line cap=round,
        line join=round,
        every node/.style={font=\small},
        panel/.style={draw=black,line width=.35pt},
        level/.style={draw=black!45,line width=.35pt},
        freebdry/.style={draw=black,line width=1.05pt}
    ]
        \def\xL{-2.15}
        \def\xR{ 2.15}
        \def\yB{-1.20}
        \def\yT{ 2.05}

        \begin{scope}[shift={(-2.65,2.80)}]
            \draw[panel]
                (-2.5,-1.8) rectangle (2.5,2.45);

            \draw[level] (-2.5,0) -- (2.5,0);

            \draw[freebdry]
                plot[domain=-2.5:0,samples=80]
                (\x,{-\x/sqrt(3)-.1025*\x*\x});

            \draw[freebdry]
                plot[domain=0:2.5,samples=80]
                (\x,{\x/sqrt(3)-.1025*\x*\x});

            \draw[line width=.35pt]
                (30:.26)
                arc[start angle=30,end angle=150,radius=.26];

            \node[
                font=\scriptsize,
                fill=white,
                inner sep=.4pt
            ] at (.06,.47) {$120^\circ$};

            \node at (0,1.55) {$u>0$};
            \node at (0,-.92) {$u=0$};

            \fill (0,0) circle (1.7pt);
        \end{scope}
    \end{tikzpicture}
    \caption{Stokes corner}
    \label{fig.stokes_corner}
\end{figure}

Conversely, if \(x^0\) is a degenerate stagnation point, then every \(3/2\)-homogeneous blow-up satisfies
\(u_0\equiv0\). In this case, the limiting configuration is determined by the phase indicator. More precisely, after passing to a subsequence,
\[
    \chi_{\{u_{r_j}>0\}}
    \longrightarrow
    \chi_0
    \qquad\text{in }L^1_{\mathrm{loc}}(\mathbb R^2),
\]
where \(\chi_0\in\{0,1\}\) a.e. The weighted density is then
\[
    M(0^+)=\int_{B_1}x_2^+\chi_0\,\dx{x}.
\]
Due to the natural of \(\chi_0\), the quantity \(M(0^+)\) can be interpreted as the weighted density of the positivity set at the degenerate stagnation point \(x^0\), with weight \(x_2^+\). In particular, if \(\chi_0\equiv 0\), then \(M(0^+)=0\), and such a degenerate stagnation point is called a \emph{cusp point}. Under the assumption that the free surface is an injective continuous curve, the asymptotic behavior of the free surface near a cusp point is illustrated in Figure \ref{fig.right_left_cusp}. Singularities of this type only exist for rotational waves. For the irrotational case \(f\equiv 0\), it is proved in \cite{VW11} that if \(C=1\) in \eqref{eq.growth_assumption}, then cusp points do not exist. We also refer readers to the recent work \cite{McC25} that removes the restriction \(C=1\). For cusp singularities of rotational waves, V\u{a}rv\u{a}ruc\u{a} and Weiss \cite{VW12} further conjectured that cusp configurations can be ruled out under the Rayleigh-Taylor sign condition 
\[
    |\nabla u|^2+2F(u)-x_n^+\le 0\quad\text{ in }\Omega\cap\{u>0\}.
\] 

\begin{figure}[!htbp]
    \centering
    \begin{tikzpicture}[
        x=.93cm,
        y=.93cm,
        line cap=round,
        line join=round,
        every node/.style={font=\small},
        panel/.style={draw=black,line width=.35pt},
        level/.style={draw=black!45,line width=.35pt},
        freebdry/.style={draw=black,line width=1.05pt}
    ]
        \begin{scope}[shift={(-3.25,0)}]
            \draw[panel]
                (-2.5,-1.8) rectangle (2.5,2.45);

            \draw[level] (-2.5,0) -- (2.5,0);

            \draw[freebdry]
                plot[domain=-2.5:0,samples=80]
                (\x,{.22*\x*\x});

            \draw[freebdry]
                plot[domain=-2.5:0,samples=80]
                (\x,{.091*\x*\x});

            \node at (.05,1.58) {$u=0$};
            \node at (.05,-.92) {$u=0$};

            \node[
                font=\tiny,
                fill=white,
                inner sep=.02pt
            ] at (-2.16,.65) {$u>0$};

            \fill (0,0) circle (1.7pt);

            \node[font=\footnotesize] at (0,-2.15)
                {(a) Left cusp};
        \end{scope}

        \begin{scope}[shift={(3.25,0)}]
            \draw[panel]
                (-2.5,-1.8) rectangle (2.5,2.45);

            \draw[level] (-2.5,0) -- (2.5,0);

            \draw[freebdry]
                plot[domain=0:2.5,samples=80]
                (\x,{.22*\x*\x});

            \draw[freebdry]
                plot[domain=0:2.5,samples=80]
                (\x,{.091*\x*\x});

            \node at (-.05,1.58) {$u=0$};
            \node at (-.05,-.92) {$u=0$};

            \node[
                font=\tiny,
                fill=white,
                inner sep=.02pt
            ] at (2.17,.66) {$u>0$};

            \fill (0,0) circle (1.7pt);

            \node[font=\footnotesize] at (0,-2.15)
                {(b) Right cusp};
        \end{scope}
    \end{tikzpicture}
    \caption{Cusp asymptotics}
    \label{fig.right_left_cusp}
\end{figure}

If \(\chi_0=1\), a direct computation yields \(M(0^+)=\int_{B_1}x_2^+\,\dx{x}=2/3\), and we call such a degenerate stagnation point a \emph{horizontally flat point}.\footnote{In general dimensions, such singularities are usually called full-density singularities or highest-density singularities. See \cite{KW25,WZ10} and references therein.} In this case, the free boundary is asymptotically flat at the stagnation point (see Figure \ref{fig.horizontal_flat}). Horizontally flat points can arise when the vorticity is negative at the free surface (see, e.g., \cite[Remark 6.4 (i)]{VW12}). V\u{a}rv\u{a}ruc\u{a} and Weiss emphasized that ``\emph{of particular difficulty is the case when the vorticity is \(0\) at the free surface, and may have infinitely many sign changes accumulating there}'' \cite[p.~863]{VW12}. In order to exclude horizontally flat points, V\u{a}rv\u{a}ruc\u{a} and Weiss \cite[Sec. 6-10]{VW12} first work in the general \(n\)-dimensional setting, and define the set of horizontally flat points by
\[
    \Sigma^u:=\Bigl\{
        x^0\in \partial\{u>0\}\cap\{x_n=0\} : M(0^+) = \int_{B_1} x_n^+ \, \dx{x}
    \Bigr\}.
\]

\begin{figure}[!htbp]
    \centering
    \begin{tikzpicture}[
        x=.93cm,
        y=.93cm,
        line cap=round,
        line join=round,
        every node/.style={font=\small},
        panel/.style={draw=black,line width=.35pt},
        level/.style={draw=black!45,line width=.35pt},
        freebdry/.style={draw=black,line width=1.05pt}
    ]
    \begin{scope}[shift={(2.65,-2.80)}]
            \draw[panel]
                (-2.5,-1.8) rectangle (2.5,2.45);

            \draw[level] (-2.5,0) -- (2.5,0);

            \draw[freebdry]
                plot[domain=-2.5:2.5,samples=80]
                (\x,{.120*\x*\x});

            \node at (.05,1.50) {$u>0$};
            \node at (.05,-.92) {$u=0$};

            \fill (0,0) circle (1.7pt);
        \end{scope}
    \end{tikzpicture}
    \caption{Horizontally flat point}
    \label{fig.horizontal_flat}
\end{figure}

For every \(x^0\in\Sigma^u\), they introduce an Almgren-type frequency formula
\begin{equation}\label{eq.freqDr}
    D_{x^0,u}(r) = D(r) = \frac{r \int_{\partial B_r(x^0)} u \, \nabla u\cdot\nu \, \dH{n-1}}
    {\int_{\partial B_r(x^0)} u^2 \, \dH{n-1}},
\end{equation}
together with the associated normalized rescaling
\begin{equation}\label{eq.phir}
    \phi_r(x) = \frac{u(x^0 + r x)}{A(r)}, \quad 
    A(r) = \left( r^{1-n}\int_{\partial B_r(x^0)} u^2 \, \dH{n-1} \right)^{1/2}.
\end{equation}
The following theorem summarizes the main results for points in \(\Sigma^u\), proved in \cite[Proposition 7.1 and Theorem 9.1]{VW12} under the growth assumption \eqref{eq.growth_assumption} and the condition \(|f(z)|\le Cz\) for all \(z\in(0,z_0)\).

\begin{theorem}[cf. Proposition 7.1 and Theorem 9.1 in \cite{VW12}]\label{thm.VWflat}
    Let \(u\) be a variational solution to the problem \eqref{eq.wwp}, and let \(x^0\in\Sigma^u\). Assume that there exists a constant \(C > 0\) such that 
    \begin{equation}\label{eq.fgrowth}
        |f(z)| \le Cz\qquad\text{ for all }z\in (0,z_0).
    \end{equation}
    Then the limit \(D(0^+) := \lim_{r\to 0^+} D(r)\) exists and satisfies \(D(0^+) \ge 3/2\). Moreover, the blow-up sequence \(\{\phi_r\}_{r>0}\) defined in \eqref{eq.phir} is uniformly bounded in \(W^{1,2}(B_1)\) with 
    \[
        \int_{B_1}|\nabla\phi_r|^2\,\dx{x} + \int_{B_1}\phi_r^2\,\dx{x} \le C_nD(0^+),
    \]
    for all \(r>0\) sufficiently small, where \(C_n>0\) depends only on \(n\).

    Furthermore if \(n=2\), the convergence \(\phi_r \to \phi_0\) is strong in \(W^{1,2}_{\mathrm{loc}}(B_1\setminus\{0\})\) and the blow-up limit \(\phi_0\) is of the form
    \[
        \phi_0(x) =  
        \sqrt{\frac{2}{\pi}}\,\rho^{N(x^0)}
        \left|\sin\left(N(x^0)\min\{\max\{\theta,0\},\pi\}\right)\right|,
    \]
    where \(N(x^0)\ge 2\) is an \emph{integer} that depends on \(x^0\).
\end{theorem}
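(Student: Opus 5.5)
The plan is to follow the Varvaruca--Weiss strategy. First, differentiate the frequency \(D(r)\) and obtain an almost-monotonicity identity for it. Then get the lower bound \(D(0^+)\ge 3/2\) from the density information at points of \(\Sigma^u\) via the Weiss formula of Theorem \ref{thm:monotonicity_formula}. Finally, compactness in \(W^{1,2}\) and a two-dimensional classification of the limits give the form of \(\phi_0\).

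\textbf{Frequency identity.} Write \(H(r)=\int_{\partial B_r}u^2\) and \(G(r)=\int_{B_r}(|\nabla u|^2-uf(u))\). Testing the equation against \(u\) gives \(r\int_{\partial B_r}u\,\nabla u\cdot\nu=rG(r)\). This uses the fact that \(u\) vanishes on the free boundary, which holds for variational solutions under \eqref{eq.growth_assumption}. The domain-variation identity underlying Theorem \ref{thm:monotonicity_formula} supplies the Pohozaev/Rellich identity for \(G'\). That identity contains the term \(\int_{B_r}x_n\chi_{\{u>0\}}\) and its boundary counterpart, together with \(F\)-terms. Combining them yields
\[
D'(r)=\frac{2r}{H}\Big[\int_{\partial B_r}(\nabla u\cdot\nu)^2\,H-\Big(\int_{\partial B_r}u\,\nabla u\cdot\nu\Big)^2\Big]\frac1H+\mathcal E(r).
\]
The bracket is nonnegative by Cauchy--Schwarz. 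The error \(\mathcal E\) collects two kinds of terms. The first comes from \(f\): by \eqref{eq.fgrowth}, \(|uf(u)|,|F(u)|\le Cu^2\), and a Poincaré-type estimate shows these are bounded by \(Cr\) times the main quantities. The second comes from the gravity term \(\int x_n\chi_{\{u>0\}}\) compared with \(H\).

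\textbf{Main obstacle.} The hard step is controlling this gravity error. I would rewrite it through the Weiss quantity. At \(x^0\in\Sigma^u\), \(M(r)-M(0^+)\) equals \(r^{-n-1}\) times the deficit \(\int_{B_r}x_n(\chi_{\{u>0\}}-1)\), plus Dirichlet terms. Integrating \eqref{mono.Wei.model1} from \(0\) to \(r\) then bounds the deficit by \(r^{-n-2}H(r)\) times \(\big(D(r)-3/2\big)\), up to \(O(r)\) errors. In particular, the sign of the gravity term is favorable as long as \(D\ge 3/2\).

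From this I would conclude that \(D(r)+Cr\) is almost monotone, hence \(D(0^+)\) exists. The bound \(D(0^+)\ge 3/2\) follows because \(D<3/2\) would force \(H(r)\gtrsim r^{n+2-\epsilon}\). That contradicts the vanishing of all \(3/2\)-homogeneous blow-ups, which holds at points of \(\Sigma^u\) since \(M(0^+)\) is the full density. I expect the rigorous bookkeeping here, keeping track of which terms are absorbed by \(H\), to be the delicate part.

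\textbf{Compactness.} The bound \(D(r)\le D(0^+)+o(1)\) gives \(\int_{B_1}|\nabla\phi_r|^2\le D(r)+Cr\), and \(\int_{\partial B_1}\phi_r^2=1\). A trace-Poincaré inequality then bounds the \(L^2\) norm, which gives the stated estimate with \(C_n\). Since \(A(r)\gg r^{3/2}\), the rescaled equation for \(\phi_r\) has vanishing right-hand side and vanishing free-boundary gradient. Weak limits \(\phi_0\) are therefore harmonic in \(\{\phi_0>0\}\).

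\textbf{Strong convergence and classification in two dimensions.} Strong convergence away from the origin would come from the vanishing of the domain-variation defect measure, using the frequency identity to show the limit is \(D(0^+)\)-homogeneous. Because \(x^0\in\Sigma^u\) means the positivity set has full density in \(\{x_2>0\}\), \(\phi_0\) vanishes on \(\{x_2\le0\}\) and is harmonic and homogeneous in the upper half plane. The only such functions are \(c\rho^N\sin(N\theta)\) on \((0,\pi)\). Nonnegativity of \(\phi_r\) only allows \(N=1\) directly. However, the limit may be an absolute value, because interior zero curves of \(u\) can collapse, and harmonicity across \(\{\phi_0=0\}\cap\{x_2>0\}\) forces \(N\) to be an integer. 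The case \(N=1\) is excluded by \(D(0^+)\ge3/2\), so \(N\ge2\). Finally, the normalization \(\int_{\partial B_1}\phi_0^2=1\) gives \(c=\sqrt{2/\pi}\).
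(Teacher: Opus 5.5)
\paragraph{The gap: a scale-by-scale lower bound on the frequency.}
There is a genuine gap at the step you flag as delicate, and as arranged the argument is circular.

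Integrating \eqref{mono.Wei.model1} with $|K(t)|\le CtJ(t)$ gives only
\[
  M(r)-M(0^+)\ge -C\int_0^r t\,\cJ(t)\,\dx{t},\qquad \cJ(t):=t^{-n-2}J(t).
\]
With $J\le Ct^{n+2}$ this is an $O(r^2)$ error in \emph{absolute} terms. But the gravity term enters the frequency after division by $\cJ(r)$, since $D(r)-V(r)-\tfrac32=(M(r)-M(0^+))/\cJ(r)$. At a point of $\Sigma^u$ every $3/2$-homogeneous blow-up vanishes, so $\cJ(r)\to0$. An absolute error therefore says nothing about $D-V-\tfrac32$.

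To make the error relative you need $\int_0^r t\,\cJ(t)\,\dx{t}\lesssim r^2\cJ(r)$, i.e.\ that $\cJ$ is not much larger at smaller scales. Since $(\log\cJ)'=\tfrac2r(D-\tfrac32)$, this is a lower bound on $D$ at every small scale, which is exactly what you want to prove.

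You then obtain existence of $D(0^+)$ from a sign that is favorable ``as long as $D\ge 3/2$'', and obtain $D(0^+)\ge 3/2$ from existence of the limit. The second step is fine: if $D<\tfrac32-\epsilon$ near $0$, then $\cJ(r)\to\infty$, contradicting $J\le Cr^{n+2}$. But it cannot supply the first.

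What is missing is a bound $D(r)-V(r)-\tfrac32\ge-\eta(r)$ at every small $r$, with $\eta^2/r$ integrable near $0$. V\u{a}rv\u{a}ruc\u{a}--Weiss obtain it ($\eta=Cr^2$) by turning the display above into the Bessel-type inequality $(Y'/r)'\ge-\alpha Y/r$ for $Y(r)=\int_0^r t^{-n-1}J(t)\,\dx{t}$, and then using convexity of $Y/\sqrt r$. The paper instead adds $r^{-n-2+\beta}J(r)$ to the Weiss functional. The error $-\mathsf W r\cJ$ is then absorbed by $\beta r^{\beta-1}\cJ$, which gives $\Psi_\beta\ge0$, i.e.\ $H_\beta=D-\tfrac32-V+r^\beta\ge0$. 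This covers \eqref{eq.fgrowth} as a special case of Assumption~\ref{assumption:nonlinearity}.

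\paragraph{Further points to fix.}
\begin{itemize}
\item \emph{Differentiate $D-V$, not $D$.} The gravity part of $D'$ is $\frac rJ\int_{\partial B_r}x_n^+(1-\chi_{\{u>0\}})\,\dH{n-1}-\frac{n+1}{r}V(r)$. Its last term has a fixed bad sign and is not known to be integrable. Only in $(D-V)'$ does it recombine into $\frac{2V}{r}(D-\tfrac32)$, so ``$D+Cr$ almost monotone'' is not what the identity gives.
\item \emph{You also need $V(r)\to0$.} This is required to pass from $\lim(D-V)$ to $D(0^+)$. The paper gets $\int_0^r V(t)^2\,\dx{t}/t<\infty$ from the $(r^\beta-V)^2$ term, and then uses dyadic comparability of $\cJ$ and $V$.
\item \emph{The scaling in your compactness step is reversed.} Since $\cJ(r)\to0$, we have $A(r)=o(r^{3/2})$. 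The rescaled Bernoulli condition is then $|\nabla\phi_r|^2=(r^3/A(r)^2)\,x_n$, with coefficient tending to $+\infty$. The corresponding term in the rescaled domain-variation identity is controlled by $V(r)$ (away from $\{x_n=0\}$). So it is again $V(r)\to0$, not the scaling, that removes it.
\item \emph{Integrality of $N$.} $\rho^N|\sin N\theta|$ is not harmonic across its interior zero rays. Integrality comes from the limiting domain-variation identity: $|\nabla\phi_0|^2$ is equal on both sides of each zero ray, which forces the $\pi/N$-sectors to tile $(0,\pi)$. This is where the strong $W^{1,2}_{\mathrm{loc}}(B_1\setminus\{0\})$ convergence is needed.
\end{itemize}
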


As a direct application of the above result, V\u{a}rv\u{a}ruc\u{a} and Weiss proved the nonexistence of horizontal flat singularity when the free surface is an injective curve.

\begin{corollary}
    Let \(n=2\) and let \(u\) be a variational solution to the problem \eqref{eq.wwp}. Assume that the free surface \(\partial\{u>0\}\) is an injective curve in a neighborhood of \(x^0\in\Sigma^u\). Assume also that either \(f(z)\ge 0\) for all \(z\) in a right neighborhood of \(0\) or \(|f(z)| \le C z\) for \(z \in [0,z_0]\). Then \(\Sigma^u=\emptyset\).
\end{corollary}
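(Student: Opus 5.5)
The plan is to argue by contradiction. Suppose \(x^0\in\Sigma^u\) and the free surface near \(x^0\) is an injective continuous curve \(\sigma\). The two hypotheses on \(f\) are handled separately. In the case \(|f(z)|\le Cz\), Theorem \ref{thm.VWflat} applies directly: the frequency limit \(D(0^+)=N(x^0)\ge 2\) is an integer, and along \(r\to0^+\) the normalized blow-ups \(\phi_r\) converge strongly in \(W^{1,2}_{\rm loc}(B_1\setminus\{0\})\), and locally uniformly away from the origin, to
\[
    \phi_0=\sqrt{2/\pi}\,\rho^{N}\bigl|\sin(N\min\{\max\{\theta,0\},\pi\})\bigr|.
\]
This limit vanishes on the \(N+1\ge 3\) rays \(\theta=k\pi/N\), \(k=0,\dots,N\), and is positive on the \(N\) sectors between consecutive rays.

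The main step is to turn this into a topological contradiction with injectivity. Since \(\phi_0>0\) in the open sector \(S_k=\{k\pi/N<\theta<(k+1)\pi/N\}\), locally uniform convergence gives \(\phi_r>0\) on compact subsets of each sector, at least on annuli \(\{1/4<\rho<3/4\}\). Hence, for small \(r\), \(\{u>0\}\cap (B_{3r/4}\setminus B_{r/4})(x^0)\) contains at least \(N\ge2\) components lying in angularly separated sectors. Between two adjacent sectors, for example around the interior ray \(\theta=\pi/N\) with \(N\ge2\), the function \(\phi_r\) changes from positive to zero to positive. To make this precise I would use the nondegeneracy of \(u\), namely that \(u\) grows linearly away from the free boundary, to check that the zero set near that ray contains points of \(\partial\{u>0\}\) separating the two positive regions. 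Thus the free boundary meets the annulus along a set that separates at least two sector components of the positive phase.

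The curve \(\sigma\) passes through \(x^0\) with at most two arcs leaving it. Near \(x^0\) the complement of an injective curve through \(x^0\) has at most two local components, one above and one below, and by \(\chi_0\equiv1\) on \(\{x_2>0\}\) the positivity set is essentially the upper one. A single connected component of \(\{u>0\}\) cannot therefore produce the \(N\ge 2\) positive sectors separated by an interior zero ray for all small \(r\). I would make this precise by showing that each interior nodal ray forces \(\partial\{u>0\}\) to contain a third arc emanating from \(x^0\), or a closed loop near \(x^0\), which contradicts injectivity. I expect this step, connecting the nodal structure of \(\phi_0\) to the geometry of \(\sigma\) and in particular ruling out that the zero set near the interior ray is a thin region of measure zero in the positivity density, to be the main obstacle. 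The density condition \(M(0^+)=\int x_2^+\) and the free boundary condition \(|\nabla u|^2=x_2\), which gives \(|\nabla u|>0\) on the free boundary off the axis, should help here.

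In the case \(f\ge0\) near \(0\), \(u\) is superharmonic in \(\{u>0\}\). I would show directly that the free surface cannot be an injective curve that is asymptotically flat. The strong minimum principle and Hopf's lemma applied in the upper component, compared against the Bernoulli condition \(|\nabla u|^2=x_2\) on the flat-looking boundary, should be in tension with the full-density blow-up \(\chi_0=\chi_{\{x_2>0\}}\) together with \(u_r\to0\). Alternatively I would reduce to a frequency argument using that \(K\) has a sign in this case. I would treat this case second, following the same contradiction scheme.
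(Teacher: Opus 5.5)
\paragraph{Case \(|f(z)|\le Cz\).} Here your route is the paper's: Theorem \ref{thm.VWflat} gives \(N\ge2\), and then the curve argument of \cite[Theorem 10.1]{VW12} is applied. The step you flag as the main obstacle is the whole content of that argument, and the tools you propose for it are not available.

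\begin{itemize}
\item Theorem \ref{thm.VWflat} gives only strong \(W^{1,2}_{\rm loc}(B_1\setminus\{0\})\) convergence. In two dimensions this does not yield local uniform convergence, so positivity of \(\phi_r\) on compact subsets of the sectors does not follow.
\item Nondegeneracy is not known for variational solutions. At a point of \(\Sigma^u\) it is exactly what fails, since \(u=o(|x-x^0|^{3/2})\) and the Bernoulli constant \(\sqrt{x_2}\) vanishes at \(x^0\).
\item The complement of \(\sigma\) in a round disc about \(x^0\) can have many components, because the arc may leave and re-enter the disc at every scale.
\item Connectedness of the positive phase does not by itself exclude a nodal ray in the limit, since the zero set near the ray may be thin.
\end{itemize}

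The missing idea is to read the geometry off the measures. Write \(\Delta\phi_r=\mu_r-r^2A_r^{-1}f(A_r\phi_r)\chi_{\{\phi_r>0\}}\), with \(\mu_r\ge0\) carried by the free boundary. The last term is \(O(r^2\phi_r)\to0\), so \(\mu_r\rightharpoonup\Delta\phi_0\).

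\begin{itemize}
\item Since \(\phi_0\) has a kink on every ray \(\theta=k\pi/N\), \(0\le k\le N\), the measure \(\Delta\phi_0\) charges each of them. So at every small scale the free boundary enters any neighborhood of each nodal ray.
\item \(\Delta\phi_0=0\) in the open sectors. For weak solutions, \(\mu_r=(r^{3/2}/A_r)\sqrt{x_2}\,\mathcal H^1\mres\partial\{\phi_r>0\}\), and \(r^{3/2}/A_r=(r^{-4}J(r))^{-1/2}\to\infty\) at a degenerate point. Hence the length of the free boundary in compact subsets of the sectors tends to zero, and no connected piece of \(\sigma\) near \(x^0\) can pass from one nodal ray to another.
\item By uniform continuity, \(\sigma\) crosses a fixed annulus only finitely often.
\end{itemize}

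Together these give pieces of \(\sigma\) issuing from \(x^0\) along \(\theta=0\), \(\theta=\pi/N\) and \(\theta=\pi\). That is at least three arms, contradicting injectivity.

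\paragraph{Case \(f\ge0\).} Here you have no argument yet, and both suggested routes fail.

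\begin{itemize}
\item Hopf's lemma needs an interior ball (or Dini-type) condition at \(x^0\). A positive phase that is only asymptotically flat does not provide one.
\item A frequency route is unavailable. The condition \(f\ge0\) near \(0\) allows \(f(0)>0\) or \(f(z)=\sqrt z\), both outside the hypotheses of Theorem \ref{thm.VWflat}.
\item \(K\) has no sign. For \(n=2\), \(K(r)=r\int_{\partial B_r}(2F(u)-uf(u))\,\dH{1}-4\int_{B_r}F(u)\,\dx{x}\), and for \(f\equiv1\) the two terms have opposite signs.
\end{itemize}

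The paper does not use frequency theory in this case at all. Following \cite[Remark 6.3 and Proposition 6.5]{VW12}, it uses Oddson's boundary point principle for positive superharmonic functions, which replaces Hopf's lemma at asymptotically flat boundary points. This gives a lower bound for \(u\) along the inner normal at \(x^0\) that is incompatible with \(u\le C|x-x^0|^{3/2}\).

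For instance, suppose you know that \(\{u>0\}\) contains \(\{\varepsilon<\theta<\pi-\varepsilon\}\cap B_{r_\varepsilon}(x^0)\) for small \(\varepsilon\). Comparing with \(\rho^{\alpha}\sin(\alpha(\theta-\varepsilon))\), where \(\alpha=\pi/(\pi-2\varepsilon)<3/2\), already gives \(u(x^0+te_2)\ge c\,t^{\alpha}\).

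That geometric input, namely that the positive phase is asymptotically a half-plane in this sense, has to be derived from the curve assumption. One way is from the vanishing of the free-boundary measure under the \(3/2\)-scaling at a degenerate point. It does not follow from \(\chi_0=\chi_{\{x_2>0\}}\) alone.
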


We note that if \(f(z)\ge0\) in a right neighborhood of \(0\), then the exclusion of horizontally flat points does not rely on Theorem \ref{thm.VWflat}, but rather on the boundary point principle of Oddson established in \cite{Odd68}; see also Remark 6.3 and Proposition 6.5 in \cite{VW12}. 

We now explain the role of assumption \eqref{eq.fgrowth} throughout the frequency analysis. Under \eqref{eq.fgrowth}, we have \(|F(z)|\le Cz^2\), which gives the key estimate
\begin{equation}\label{eq.esmVW}
    |K(r)|\le Cr\int_{\partial B_r(x^0)}u^2\,\dH{n-1},
\end{equation}
for every \(x^0\in\Sigma^u\), with \(K(r)\) as defined in \eqref{eq.Kr}. We next introduce an auxiliary function
\[
    Y(r):=\int_0^r t^{-n-1}\int_{\partial B_t(x^0)} u^2\,\dH{n-1}\dx{t}.
\]
Using \eqref{eq.esmVW} and the monotonicity formula in Theorem \ref{thm:monotonicity_formula}, one shows that \(Y(r)\) satisfies a Bessel-type differential inequality
\[
    \frac{\dx{}}{\dx{r}}\left(\frac{Y'(r)}r\right)
    \ge
    -\alpha\frac{Y(r)}r,\qquad 0<\alpha<\infty,
\]
where the constant \(\alpha<\infty\) depends on \(C\) from \eqref{eq.fgrowth}. Crucially, this implies that for all sufficiently small \(r>0\), 
\[
    \frac{\dx{}^2}{\dx{r^2}}\left(\frac{Y(r)}{\sqrt{r}}\right)
    \ge 
    \frac{\tfrac34-\alpha r^2}{r^{5/2}}Y(r)\ge 0,
\]
which justifies the convexity argument underlying the lower bound estimates for the frequency function \(D(r)\); see \cite[Theorem 6.12]{VW12}.

The main objective of this paper is to relax the assumption \eqref{eq.fgrowth} on \(f\) and push the theoretical boundaries of the frequency analysis for full-density degenerate stagnation points developed in \cite{VW12}, while bypassing the need for Bessel-type differential inequalities. In the process, we introduce an interesting modified Weiss-type monotonicity formula and a modified Almgren-type frequency formula, which are of independent interest.

\subsection{Main results}
We now state our main results. Let \(z_0>0\) and let \(f\in C^0((0,z_0))\).

\begin{assumption}\label{assumption:nonlinearity}
    We denote \(f^\pm(z) := \max\{\pm f(z),0\}\) and assume that there exist nonnegative finite constants \(\mathsf P_0\) and \(\mathsf N_0\) such that
    \begin{equation}\label{eq.P0N0}
        f^+(z)\le\mathsf P_0\,z,\qquad 
        f^-(z)\le\mathsf N_0\,z\bigl(1+|\log z|\bigr),\qquad\text{ for all }z\in(0,z_0).
    \end{equation}
    Set \(F(z):=\int_0^zf(s)\,\dx{s}\), and we assume in addition that there exists a nonnegative finite constant \(\mathsf W_0\) such that
    \begin{equation}\label{eq.W0}
        |zf(z)-2F(z)|\le\mathsf W_0\,z^2\qquad\text{ for all }z\in(0,z_0).
    \end{equation}
\end{assumption}

Before displaying our main results, we comment on the above assumption. 

\begin{remark}
    The one-sided bounds on \(f^\pm(z)\) in \eqref{eq.P0N0} give 
    \[
        |f(z)|\le (\mathsf P_0+\mathsf N_0)\, z
        +
        \mathsf N_0\, z|\log z|,\qquad\text{ for all }z\in(0,z_0),
    \]
    so \(f\in L^1((0,z_0))\) and \(\lim_{z\to 0^+}f(z)=0\). Thus \(f\) extends continuously to \(0\) by setting \(f(0):=0\). In particular, the primitive \(F(z)\) is well-defined with \(F(0)=0\). 
\end{remark}

\begin{remark}
    If \(f(z)\) satisfies \eqref{eq.fgrowth}, then it is easy to check that Assumption \ref{assumption:nonlinearity} holds with \(\mathsf P_0=\mathsf N_0=C\) and \(\mathsf W_0=2C\). In particular, the assumption \eqref{eq.fgrowth} is a special case of Assumption \ref{assumption:nonlinearity}. It should also be noted that the conditions \eqref{eq.P0N0} do not imply \eqref{eq.W0}, but \eqref{eq.W0} is automatically satisfied if \eqref{eq.fgrowth} is satisfied.
\end{remark}

\begin{remark}
    As we have mentioned in the introduction, V\u{a}rv\u{a}ruc\u{a} and Weiss emphasized that a particularly difficult regime occurs when the vorticity vanishes at the free surface and may have infinitely many sign changes accumulating there. 
    
    In the sign-changing case, their frequency analysis in Theorem \ref{thm.VWflat} is carried out under the condition \eqref{eq.fgrowth}. Assumption \ref{assumption:nonlinearity} contains this oscillatory linear-growth class. More generally, it controls the two signs in an asymmetrical way: the positive part remains linearly bounded, whereas the negative part may exhibit the logarithmic growth \(z(1+|\log z|)\), subject to the condition \eqref{eq.W0}.
\end{remark}

\begin{example}
    Let \(f(z)=-z|\log z|\) for \(z\in(0,z_0)\) with \(z_0<1\). A direct calculation shows that \(F(z)=\frac12 z^2\log z-\frac14 z^2\) and \(zf(z)-2F(z)=\frac 12 z^2\). Hence Assumption \ref{assumption:nonlinearity} holds with \(\mathsf P_0=0\), \(\mathsf N_0=1\) and \(\mathsf W_0=1/2\). On the other hand, 
    \[
        \frac{|f(z)|}{z} = |\log z|\to+\infty\qquad\text{ as }z\to 0^+,
    \] 
    so \(f(z)\) does not satisfy \eqref{eq.fgrowth} in Theorem \ref{thm.VWflat}. 

    Moreover, it should be noted that \(f(z)<0\) in every right neighborhood of \(0\). Consequently, \(\Delta u=-f(u)>0\) near the stagnation points in the positive phase, and the superharmonic boundary point principle of Oddson \cite{Odd68} used in \cite[Proposition 6.5]{VW12} is not applicable. Thus this example is covered by neither of the two alternatives used in \cite{VW12} to exclude horizontally flat singularities.
\end{example}

\begin{example}
    A more general nonlinearity can be given as 
    \[
        f_a(z)=z\bigl(
            a+\log z
        \bigr),\qquad 0<z<z_0<1,\qquad a>0.
    \]
    A direct computation shows that Assumption \ref{assumption:nonlinearity} holds with \(\mathsf P_0=a\), \(\mathsf N_0=1\) and \(\mathsf W_0=1/2\). If in particular \(e^{-a}<z_0\), then \(f_a(z)\) changes sign at \(z=e^{-a}\). As in the preceding example, 
    \[
        \frac{|f_a(z)|}{z} = |a+\log z|\to+\infty\qquad\text{ as }z\to 0^+,
    \]
    and hence \(f_a(z)\) does not satisfy \eqref{eq.fgrowth}. 
\end{example}

We now state our main results.

\begin{customthm}{I}\label{thm:main}
    \itshape 
    Let \(u\) be a variational solution (cf. Definition \ref{def:variational_solution}) to the problem \eqref{eq.wwp} and let \(x^0\in \Sigma^u\). Suppose in addition that the nonlinearity \(f(z)\) satisfies Assumption \ref{assumption:nonlinearity}. Then the limit \(D(0^+) := \lim_{r \to 0^+} D(r)\) exists and satisfies \(D(0^+) \ge 3/2\). Consequently, the blow-up sequence \(\{\phi_r\}_{r>0}\) defined in \eqref{eq.phir} is bounded in \(W^{1,2}(B_1)\) with 
    \[
        \int_{B_1}|\nabla\phi_r|^2\,\dx{x} + \int_{B_1}\phi_r^2\,\dx{x} \le C_nD(0^+),
    \]
    for all \(r>0\) sufficiently small, where \(C_n\) is a finite dimensional constant depends on \(x^0\).

    Moreover if \(n=2\), the convergence \(\phi_r \to \phi_0\) is strong in \(W^{1,2}_{\mathrm{loc}}(B_1\setminus\{0\})\) and the blow-up limit \(\phi_0\) is of the form
    \[
        \phi_0(x) =  
        \sqrt{\frac{2}{\pi}}\,\rho^{N(x^0)}
        \left|\sin\left(N(x^0)\min\{\max\{\theta,0\},\pi\}\right)\right|,
    \]
    where \(N(x^0)=D(0^+)\ge 2\) is an \emph{integer} that depends on \(x^0\).
\end{customthm}

\begin{remark}
    In comparison with Theorem \ref{thm.VWflat} from \cite{VW12}, our Theorem \ref{thm:main} yields the same compactness and convergence conclusions under substantially weaker assumptions: the growth condition \eqref{eq.fgrowth} is relaxed to the more general Assumption \ref{assumption:nonlinearity}. More notably, our proof strategy is entirely distinct from that of \cite{VW12}: we develop a new argument that completely avoids reliance on Bessel-type differential inequalities, which were a key technical cornerstone of the original proof. 
\end{remark}

As a direct application of Theorem \ref{thm:main}, we can rule out the existence of horizontal flat singularities for a larger class of nonlinearities, and consequently prove the Stokes conjecture for rotational waves with such vorticity.

\begin{customthm}{II}[cf. Main Theorem in \cite{VW12}]\label{thm:main2}
    \itshape
    Let \(n=2\) and let \(\psi\) be a weak solution (cf. Definition \ref{def:weak-solutions}) of 
    \begin{equation*}
        \left\{
            \begin{aligned}
                \Delta \psi(x,y)&=-f(\psi)\qquad
                &&\text{ in } \Omega\cap\{\psi>0\},\\[5pt]
                |\nabla \psi(x,y)|^2&=-y\qquad
                &&\text{ on } \Omega\cap\partial\{\psi>0\},
            \end{aligned}
        \right.
    \end{equation*}
    Assume that \(\psi\) satisfies 
    \[
        |\nabla \psi(x,y)|^2 \le C\max(-y,0)\quad\text{ locally in }\Omega,
    \]
    let the free boundary \(\partial\{\psi>0\}\) be a continuous injective curve \(\sigma=(\sigma_1,\sigma_2)\) such that \(\sigma(0)=(x_0,0)\), and assume that the vorticity function \(f\) satisfies either Assumption \ref{assumption:nonlinearity} or \(f(z)\ge 0\) for all \(z\) in a right neighborhood of \(0\). Then 
    \begin{enumerate}[label=(\roman*)]
        \item \label{item1} If the Lebesgue density of the set \(\{\psi>0\}\) at \((x_0,0)\) is positive, then the free boundary in a neighborhood of \((x_0,0)\) is the union of two \(C^1\)-graphs of functions \(\eta_1:(x_0-\delta,x_0]\to\R\) and \(\eta_2:[x_0,x_0+\delta)\to\R\) which are both continuously differentiable up to \(x_0\) and satisfy \(\eta_1'(x_0)=1/\sqrt{3}\) and \(\eta_2'(x_0)=-1/\sqrt{3}\);
        \item If the Lebesgue density of the set \(\{\psi>0\}\) at \((x_0,0)\) is zero, then \(\sigma_1(t)\ne x_0\) for \((-t_1,t_1)\setminus\{0\}\), \(\sigma_1(t)-x_0\) does not change its sign at \(t=0\) and 
        \[
            \lim_{t\to 0}\frac{\sigma_2(t)}{\sigma_1(t)-x_0}=0
        \]
    \end{enumerate}
    If we assume in addition that either \(\{\psi>0\}\) is a subgraph of a function in the \(y\)-direction or that \(\{\psi>0\}\) is a Lipschitz set, then the set of stagnation points is locally finite in \(\Omega\), and at each stagnation point \((x_0,0)\) the statement \ref{item1} holds.
\end{customthm}

We also state our new modified Weiss-type monotonicity formula and modified Almgren-type frequency formula, which are of independent interest. We believe theses newly inventioned tools have the potential to be applied to other free boundary problems with nonlinearities that exhibit logarithmic growth.

\begin{proposition}[Modified Weiss-type monotonicity formula]\label{prop:modified_Weiss}
    Let \(u\) be a variational solution of \eqref{eq.wwp}, let \(x^0\in \Sigma^u\) and let \(\delta:=\dist(x^0,\partial\Omega)/2\). Assume that the nonlinearity \(f(z)\) satisfies Assumption \ref{assumption:nonlinearity}. For each fixed \(\beta\in(0,1)\), define the modified Weiss-type functional
    \begin{equation}\label{eq:modified_Weiss}
        \Psi_\beta(r) = M(r) - M(0^+) + r^{-n-2+\beta}J(r),\qquad M(0^+):=\int_{B_1}x_n^+\,\dx{x},
    \end{equation}
    where \(M(r),J(r)\) are defined in \eqref{eq.Phir} and \eqref{eq.Jr_weiss}, respectively. Then there exists \(r_\beta\in(0,\delta)\) sufficiently small such that for a.e. \(r\in (0,r_\beta)\), 
    \begin{equation*}
        \frac{\dx{}}{\dx{r}}\bigl( e^{-\tfrac{2r^\beta}{\beta}}\Psi_\beta(r) \bigr)\ge 0.
    \end{equation*}
    Moreover, \(\Psi_\beta(r)\ge 0\) for all \(r\in (0,r_\beta)\) and \(\Psi_\beta(0^+)=0\).
\end{proposition}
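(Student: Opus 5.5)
The plan is to differentiate \(\Psi_\beta\) directly with the identity \eqref{mono.Wei.model1} of Theorem \ref{thm:monotonicity_formula}. The added term \(r^{-n-2+\beta}J(r)\) should produce a positive multiple of \(r^{-n-3+\beta}J(r)\). That positive term has to absorb three things: the sign-indefinite remainder \(r^{-n-2}K(r)\), the cross term coming from \(J'\), and the correction \(2r^{\beta-1}\Psi_\beta\) generated by the weight \(e^{-2r^\beta/\beta}\). Throughout, write \(w:=\nabla u\cdot\nu-\tfrac{3}{2}\tfrac{u}{r}\) on \(\partial B_r(x^0)\). Then
\[
J'(r)=\tfrac{n-1}{r}J+2\int_{\partial B_r}u\,\nabla u\cdot\nu ,
\]
and hence
\[
\frac{\dx{}}{\dx{r}}\bigl(r^{-n-2+\beta}J\bigr)=r^{-n-2+\beta}\Bigl(2\int_{\partial B_r}u\,w\,\dH{n-1}+\tfrac{\beta}{r}J\Bigr).
\]
Since \(\frac{\dx{}}{\dx{r}}\bigl(e^{-2r^\beta/\beta}\Psi_\beta\bigr)=e^{-2r^\beta/\beta}\bigl(\Psi_\beta'-2r^{\beta-1}\Psi_\beta\bigr)\), the goal is to prove \(\Psi_\beta'\ge 2r^{\beta-1}\Psi_\beta\) for a.e. small \(r\).

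\textbf{Step 1: an upper bound for \(\Psi_\beta\).} Integrating \(\Delta u=-f(u)\) by parts on \(\{u>0\}\) (justified for variational solutions as in \cite{VW12}) gives
\[
\int_{B_r}|\nabla u|^2=\int_{\partial B_r}u\,\nabla u\cdot\nu+\int_{B_r}uf(u),
\]
so that \(I(r)=\int_{\partial B_r}u\,\nabla u\cdot\nu+\int_{B_r}x_n\chi_{\{u>0\}}\). Using \(\int_{B_r}x_n\chi_{\{u>0\}}\le r^{n+1}\int_{B_1}x_n^+=r^{n+1}M(0^+)\), this yields
\[
\Psi_\beta(r)\le r^{-n-1}\int_{\partial B_r}u\,w\,\dH{n-1}+r^{-n-2+\beta}J .
\]
Consequently \(2r^{\beta-1}\Psi_\beta\) is bounded by \(2r^{\beta-n-2}\int u w\) plus \(2r^{-n-3+2\beta}J\). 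The first piece is a cross term of the same form as the one already present, and the second is of higher order than \(r^{-n-3+\beta}J\) as \(r\to0\).

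\textbf{Step 2: absorbing the cross terms.} By Cauchy--Schwarz with weight,
\[
C r^{-n-2+\beta}\Bigl|\int u w\Bigr|\le 2r^{-n-1}\int w^2+\tfrac{C^2}{8}\,r^{-n-3+2\beta}J .
\]
Thus the square term in \eqref{mono.Wei.model1} eats all cross terms. What remains is \(\beta r^{-n-3+\beta}J-C r^{-n-3+2\beta}J\ge \tfrac{\beta}{2}r^{-n-3+\beta}J\) for \(r<r_\beta\). This is where the choice \(\beta<1\) enters: it makes the added term dominate.

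\textbf{Step 3: lower bound for \(K\) using Assumption \ref{assumption:nonlinearity}.} This is the key point. Write
\[
(n-2)uf-2nF=(n-2)(uf-2F)-4F .
\]
The boundary term and the first interior piece are controlled by \eqref{eq.W0}: they are at least \(-\mathsf W_0 rJ-(n-2)\mathsf W_0\int_{B_r}u^2\). For \(-4F\), only a lower bound is needed, and \(F(z)\le\int_0^z f^+\le \tfrac{\mathsf P_0}{2}z^2\) gives \(-4F\ge-2\mathsf P_0u^2\). So the logarithmic negative part \(\mathsf N_0\) never enters, and
\[
r^{-n-2}K\ge -C r^{-n-1}J-Cr^{-n-2}\int_0^rJ(t)\,\dx{t}.
\]
The boundary contribution is fine, since \(r^{-n-1}J\le r^{\beta}\cdot r^{-n-3+\beta}J\cdot r^{2-2\beta}\), which is small. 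The interior contribution \(r^{-n-2}\int_0^r J\) is the main obstacle, because \(J\) need not be monotone.

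My first attempt would be to use the growth bound \(u\le Cr^{3/2}\) together with an integrated form of the inequality. Specifically, I would show that the quantity \(\Lambda(r)=\int_0^r t^{-n-3+\beta}J(t)\,\dx{t}\), which is finite by the growth assumption, satisfies \(r^{-n-2}\int_0^rJ\le r^{1-\beta}\,r\,\Lambda'\)-type bounds after a Hardy/Gronwall comparison. Failing that, I would enlarge the argument by carrying \(\int_0^r t^{-n-3+\beta}J\) inside the monotone quantity and closing with Gronwall.

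\textbf{Step 4: the remaining claims.} By \eqref{eq.growth_assumption}, \(u\le Cr^{3/2}\) on \(\partial B_r(x^0)\). Hence \(r^{-n-2+\beta}J\le Cr^{\beta}\to0\), and together with \(M(r)\to M(0^+)\) this gives \(\Psi_\beta(0^+)=0\). Monotonicity of \(e^{-2r^\beta/\beta}\Psi_\beta\), combined with the limit \(0\) at \(0^+\), then gives \(\Psi_\beta\ge0\) on \((0,r_\beta)\).
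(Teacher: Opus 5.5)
Your Steps 1, 2 and 4 follow the paper's route. The bound in Step 1 is the paper's identity \eqref{eq:boundary_correction_derivative} with the nonnegative term \(r^{-n-1}\int_{B_r}x_n^+(1-\chi_{\{u>0\}})\) dropped. Your lower bound for the integrands of \(K\), via \(F(z)\le\frac{\mathsf P_0}{2}z^2\) and \eqref{eq.W0}, is exactly the paper's, and \(\mathsf N_0\) indeed plays no role. One simplification: the two cross terms \(2r^{\beta-n-2}\int_{\partial B_r}uw\) cancel identically. One comes from \((r^{-n-2+\beta}J)'\) and the other from your bound on \(2r^{\beta-1}\Psi_\beta\). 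So Step 2 is unnecessary and the square term survives:
\[
\Psi_\beta'-2r^{\beta-1}\Psi_\beta\ge 2r^{-n-1}\int_{\partial B_r}w^2\,\dH{n-1}+r^{-n-2}K(r)+\bigl(\beta r^{\beta-1}-2r^{2\beta-1}\bigr)\,r^{-n-2}J(r).
\]

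The genuine gap is the one you flag yourself. To absorb \(r^{-n-2}K\) you need the \emph{relative} estimate \(\int_{B_r}u^2\,\dx{x}\le C\,rJ(r)\), and none of your remedies provides it.

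\begin{itemize}
\item The growth bound \(u\le C|x|^{3/2}\) only gives the absolute bound \(r^{-n-2}\int_{B_r}u^2\le Cr\). The good term is \(\frac{\beta}{2}r^{\beta-1}\,r^{-n-2}J(r)\). At a point of \(\Sigma^u\) one only knows \(r^{-n-2}J(r)\to0\), at an a priori unknown rate that may be much faster than \(r^{2-\beta}\). So \(Cr\) cannot be absorbed.
\item A Hardy/Gronwall comparison cannot yield \(\int_0^rJ\lesssim rJ(r)\) without some monotonicity of \(t^{1-n}J(t)\), which is precisely what is missing.
\item Carrying \(\int_0^r t^{-n-3+\beta}J\) into the monotone quantity changes the functional. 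The proposition asserts a pointwise a.e.\ inequality for the derivative of \(e^{-2r^\beta/\beta}\Psi_\beta\) itself.
\end{itemize}

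The missing idea is the Rellich-type identity from \cite{VW12} used in Lemma \ref{lem:boundary_control_Weiss_defect}. Integrate \(\operatorname{div}(u^2x)=nu^2+2u\nabla u\cdot x\) over \(B_r\). Write \(2u\nabla u\cdot x=-\tfrac12\nabla(u^2)\cdot\nabla(r^2-|x|^2)\) and integrate by parts; the weight \(r^2-|x|^2\) vanishes on \(\partial B_r\), and the free-boundary part of \(\Delta u\) is killed by the factor \(u\). This gives
\[
rJ(r)=\int_{B_r}\Bigl[nu^2+\bigl(|\nabla u|^2-uf(u)\bigr)\bigl(r^2-|x|^2\bigr)\Bigr]\dx{x}\ge\bigl(n-\mathsf P_0r^2\bigr)\int_{B_r}u^2\,\dx{x},
\]
using only \(uf(u)\le\mathsf P_0u^2\). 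Hence \(\int_{B_r}u^2\le\frac2n\,rJ(r)\) once \(\mathsf P_0r^2\le n/2\). Then \(r^{-n-2}K\ge-\mathsf W\,r^{2-\beta}\cdot r^{\beta-1}r^{-n-2}J(r)\), which is absorbed by \(\beta r^{\beta-1}r^{-n-2}J(r)\) for \(r\) small.

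Alternatively, within your own framework, a scaled Poincar\'e--trace inequality combined with the energy identity gives \(\int_{B_r}u^2\le C\bigl(r^2|\int_{\partial B_r}uw|+rJ(r)\bigr)\) for small \(r\). The new cross term can then be absorbed by the square term you retained.
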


Let \(x^0\in \Sigma^u\) and let \(\beta\in (0,1)\) be fixed as in Proposition \ref{prop:modified_Weiss}. We introduce the modified Almgren-type frequency function as

\begin{equation}\label{eq.Hbetar}
    H_\beta(r):=\frac{\Psi_\beta(r)}{r^{-n-2}J(r)} 
    =
    \frac{M(r) -M (0^+)}{r^{-n-2}J(r)} + r^\beta
    \ge 0.
\end{equation}

Define
\begin{equation}\label{eq.Vr}
    V(r):=\frac{r\int_{B_r(x^0)}x_n^+(1-\chi_{\{u>0\}})\,\dx{x}}{\int_{\partial B_r(x^0)} u^2\,\dH{n-1}}\ge 0,
\end{equation}

We establish the following frequency formula for \(H_\beta(r)\).

\begin{proposition}[Modified Almgren-type frequency formula, cf. Theorem 6.7 in \cite{VW12}]\label{prop:modified_Almgren}
    Under the same assumptions as in Proposition \ref{prop:modified_Weiss}, we have for a.e. \(r\in (0,r_\beta)\) the inequality
    \begin{equation}\label{eq.modified_Almgren}
        \begin{aligned}
            H_\beta'(r)
            &\ge
            \frac{2}{rJ(r)}
            \int_{\partial B_r(x^0)}
            \left[
            r\nabla u\cdot\nu-D(r)u
            \right]^2
            \,\dH{n-1}\\ 
            &\qquad
            +
            \frac2rV(r)H_\beta(r)
            +
            \frac2r\left(r^\beta-V(r)\right)^2.
        \end{aligned}
    \end{equation}
    In particular, \(H_\beta\) is nondecreasing on \((0,r_\beta)\), and the finite limit
    \[
        H(0^+):=\lim_{r\to0^+}H_\beta(r)
    \]
    exists and is independent of \(\beta\). 
\end{proposition}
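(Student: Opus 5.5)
The plan is to reduce \(H_\beta\) to the classical frequency \(D(r)\) plus correction terms, and then differentiate using Theorem \ref{thm:monotonicity_formula}. All identities below hold for a.e. \(r\). As a first step I would rewrite \(M(r)-M(0^+)\). Testing the equation \(\Delta u=-f(u)\) in \(\{u>0\}\) with \(u\) (justified for variational solutions) gives
\[
\int_{B_r(x^0)}\bigl(|\nabla u|^2-uf(u)\bigr)\,\dx{x}=\int_{\partial B_r(x^0)}u\,\nabla u\cdot\nu\,\dH{n-1}=\frac{D(r)J(r)}{r}.
\]
Also \(M(0^+)=r^{-n-1}\int_{B_r(x^0)}x_n^+\,\dx{x}\). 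Together these yield
\[
M(r)-M(0^+)=r^{-n-2}J(r)\Bigl(D(r)-\tfrac32-V(r)\Bigr),
\]
and hence
\[
H_\beta(r)=D(r)-\tfrac32-V(r)+r^\beta .
\]

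Next I would compute \(H_\beta'\) from the quotient \(H_\beta=\Psi_\beta/(r^{-n-2}J)\). From \(J'=\frac{n-1}{r}J+\frac{2D}{r}J\) one gets \((r^{-n-2}J)'/(r^{-n-2}J)=(2D-3)/r\). For \(\Psi_\beta'\) I would use \eqref{mono.Wei.model1}, plus the derivative of \(r^{-n-2+\beta}J\), which equals \(r^{-n-3+\beta}J(\beta+2D-3)\). Splitting the square as
\[
\int_{\partial B_r(x^0)}\bigl(r\nabla u\cdot\nu-\tfrac32u\bigr)^2=\int_{\partial B_r(x^0)}\bigl(r\nabla u\cdot\nu-Du\bigr)^2+\bigl(D-\tfrac32\bigr)^2J,
\]
and setting \(X:=D-\tfrac32=H_\beta+V-r^\beta\), the derivative becomes
\[
H_\beta'=\frac{2}{rJ}\int_{\partial B_r(x^0)}\bigl(r\nabla u\cdot\nu-Du\bigr)^2+\frac2r X\bigl(X-H_\beta+r^\beta\bigr)+\beta r^{\beta-1}+\frac{K}{J}.
\]
Since \(X-H_\beta+r^\beta=V\), the middle term equals \(\frac2r V(H_\beta+V-r^\beta)\). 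Comparing with the right-hand side of \eqref{eq.modified_Almgren}, the difference is
\[
\frac2r r^\beta V+\beta r^{\beta-1}-2r^{2\beta-1}+\frac{K}{J}.
\]
The first term is nonnegative, so everything reduces to showing
\[
\frac{K(r)}{J(r)}\ge -\beta r^{\beta-1}+2r^{2\beta-1}\qquad\text{for small } r.
\]

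The main obstacle is a one-sided lower bound \(K(r)\ge -C\,rJ(r)\), and this is where Assumption \ref{assumption:nonlinearity} enters asymmetrically. The boundary term of \eqref{eq.Kr} is bounded below by \(-\mathsf W_0\,rJ\) using \eqref{eq.W0}. For the interior term I would write \((n-2)uf-2nF=(n-2)(uf-2F)-4F\). The first piece is controlled below by \(-(n-2)\mathsf W_0\int_{B_r}u^2\). For the second, \(F(z)\le\int_0^z f^+\le \mathsf P_0z^2/2\), so \(-4F(u)\ge-2\mathsf P_0u^2\). Crucially, the logarithmic negative part \(f^-\) only makes \(-4F\) larger and never needs to be controlled from above.

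It then remains to show \(\int_{B_r(x^0)}u^2\le C\,rJ(r)\). For this I would use Proposition \ref{prop:modified_Weiss}: \(\Psi_\beta\ge0\) gives \(H_\beta\ge0\), so \(D\ge\tfrac32-r^\beta>0\) for small \(r\). Therefore \(\frac{\dx{}}{\dx{r}}\log\bigl(r^{1-n}J\bigr)=2D/r\ge0\), which gives \(J(t)\le (t/r)^{n-1}J(r)\) for \(t<r\), and so \(\int_{B_r}u^2=\int_0^rJ(t)\,\dx{t}\le rJ(r)/n\). Thus \(K/J\ge -Cr\). Because \(\beta<1\), we have \(\beta r^{\beta-1}-2r^{2\beta-1}-Cr\ge0\) on some \((0,r_\beta)\), shrinking \(r_\beta\) if necessary, and \eqref{eq.modified_Almgren} follows.

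Finally, \(H_\beta\) is absolutely continuous, since \(M\) and \(J\) are, and \(J>0\) near \(x^0\). It is nondecreasing and nonnegative, so \(H(0^+)\) exists and is finite. Independence of \(\beta\) follows from \(H_\beta-H_{\beta'}=r^\beta-r^{\beta'}\to0\).
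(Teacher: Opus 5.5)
Your proof is correct, and it shares the paper's skeleton: the identity \(H_\beta=D-\tfrac32-V+r^\beta\), the orthogonal splitting of \(\bigl(r\nabla u\cdot\nu-\tfrac32u\bigr)^2\) around \(D(r)u\), and the absorption of a bound \(K/J\ge-Cr\) into \(\beta r^{\beta-1}\) using \(\beta<1\).

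The paper absorbs one level earlier. Lemma \ref{lem:Psi_beta_differential_inequality} proves \(\Psi_\beta'\ge2r^{\beta-1}\Psi_\beta+E\), and the paper then divides by \(r^{-n-2}J\). After that division, the terms it discards are exactly your slack \(\tfrac2r r^\beta V+\beta r^{\beta-1}-2r^{2\beta-1}+K/J\). So this part is only a reorganization.

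The real difference is how you control \(\int_{B_r}u^2\) by \(rJ(r)\).
\begin{itemize}
\item You use monotonicity of \(r^{1-n}J\), i.e.\ \(D\ge\tfrac32-r^\beta>0\), which you get from \(\Psi_\beta\ge0\). This gives the constant \(\tfrac1n\).
\item The paper, in Lemma \ref{lem:boundary_control_Weiss_defect}, uses the identity \(rJ(r)=\int_{B_r}\bigl[nu^2+(|\nabla u|^2-uf(u))(r^2-|x|^2)\bigr]\) from \cite[Lemma 6.9]{VW12}, together with \(uf(u)\le\mathsf P_0u^2\) and \(\delta_0^2\mathsf P_0\le n/2\). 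This gives the constant \(\tfrac2n\).
\end{itemize}
Your route is shorter and avoids that identity. However, it is available only because Proposition \ref{prop:modified_Weiss} is taken as given. It could not replace Lemma \ref{lem:boundary_control_Weiss_defect} in the paper's logical chain, since that lemma is what the paper uses to prove \(\Psi_\beta\ge0\) in the first place.

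Your remark that \(H_\beta\) is absolutely continuous, so that \(H_\beta'\ge0\) a.e.\ really yields monotonicity, fills in a point the paper leaves implicit.
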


\begin{remark}[Comparison with the frequency formula of V\u{a}rv\u{a}ruc\u{a} and Weiss]
    It is useful to compare our modified frequency formula \eqref{eq.modified_Almgren} with the frequency formula of V\u{a}rv\u{a}ruc\u{a} and Weiss. In \cite[Theorem 6.7]{VW12}, the frequency function is defined as \(H_{\rm VW}(r):=D(r)-V(r)\), and the identities \cite[(6.1)--(6.2)]{VW12} read, schematically,
    \begin{equation}\label{eq.VW_frequency}
        \begin{aligned}
            H_{\rm VW}'(r)
            &=
            \frac{2}{rJ(r)}
            \int_{\partial B_r(x^0)}
            \left[
            r\nabla u\cdot\nu-D(r)u
            \right]^2\,\dH{n-1} \\
            &\quad
            +
            \frac2rV(r)^2
            +
            \frac2rV(r)\left(H_{\rm VW}(r)-\frac32\right)
            +
            \frac{K(r)}{J(r)},
        \end{aligned}\tag{VW}
    \end{equation}
    where \(K(r)\) is defined in \eqref{eq.Kr}, and equivalently, writing \(D(r)=H_{\rm VW}(r)+V(r)\) in the first bracket,
    \[
        \begin{aligned}
        H_{\rm VW}'(r)
        &=
        \frac{2}{rJ(r)}
        \int_{\partial B_r(x^0)}
        \left[
        r\nabla u\cdot\nu-H_{\rm VW}(r)u
        \right]^2\,\dH{n-1}  \\
        &\quad
        +
        \frac2rV(r)\left(H_{\rm VW}(r)-\frac32\right)
        +
        \frac{K(r)}{J(r)}.
        \end{aligned}
    \]
    Thus the Vărvărucă--Weiss formula is an exact \emph{identity} for the unmodified frequency \(H_{\rm VW}=D-V\). Its right-hand side still contains the nonlinear error \(K(r)/J(r)\), whose sign is not controlled in general.

    Our frequency formula is different. We work with the modified quantity
    \[
        H_\beta(r)
        :=
        \frac{\Psi_\beta(r)}{r^{-n-2}J(r)}
        =
        D(r)-\frac32-V(r)+r^\beta
        =
        H_{\rm VW}(r)-\frac32+r^\beta.
    \]
    The shift by \(3/2\) subtracts the full-density homogeneous scale, while the added term \(r^\beta\) comes from the boundary correction in the modified Weiss functional \eqref{eq:modified_Weiss}. With this correction, the nonlinear error term \(K(r)/J(r)\) is absorbed before taking the quotient. The resulting formula is the one-sided differential inequality \eqref{eq.modified_Almgren}. Every term on the right-hand side of \eqref{eq.modified_Almgren} is nonnegative, since \(V(r)\ge0\) and \(H_\beta(r)\ge0\). 
\end{remark}

\begin{remark}
    Let us also remark that the modified frequency formula \eqref{eq.modified_Almgren} cannot be obtained from \cite[(6.1)--(6.2)]{VW12}. If one substitutes \(H_{\rm VW}=H_\beta+\frac32-r^\beta\) into \eqref{eq.VW_frequency}, the term \(K(r)/J(r)\) remains. In \cite{VW12}, this term is controlled by the assumption \eqref{eq.fgrowth}, which yields \(|F(s)|\le Cs^2\) and hence the key estimate \eqref{eq.esmVW}. This estimate is then used to derive the Bessel-type differential inequality in \cite[Theorem 6.12]{VW12}.

    Under the present assumptions this route is unavailable. The negative part of \(f\) satisfies \(f^-(z)\le C z(1+|\log z|)\). Thus \(K(r)\) need not satisfy the estimate above. The correction \(r^{-n-2+\beta} J(r)\) then converts this one-sided control into the positive square \((r^\beta-V(r))^2\) in the modified frequency formula. This is the point at which the present argument differs essentially from the frequency method in \cite{VW12}.
\end{remark}

\subsection{Organization of the paper}
Our paper is organized as follows. In Section \ref{sec:preliminaries}, we introduce some notations and notions of solutions, and review some preliminary results from \cite{VW12}. In Section \ref{sec:modified_Weiss}, we prove the modified Weiss-type monotonicity formula in Proposition \ref{prop:modified_Weiss}. In Section \ref{sec:modified_Almgren}, we establish the modified Almgren-type frequency formula in Proposition \ref{prop:modified_Almgren}. Finally, in Section \ref{sec:strong_convergence}, we prove the strong convergence of the frequency sequence and complete the proof of Theorem \ref{thm:main}.

\subsection{Related works and historical notes}

If \(f \equiv 0\), the problem \eqref{eq.wwp} reduces to the classical one-phase Bernoulli-type free boundary problem with weight \(x_n\). The nondegenerate weight case (i.e., \(f \equiv 0\) and \(|\nabla u|^2 = 1\)) has been extensively studied in the literature; see, for instance, \cite{AC81,Wei99,CJK05,DJ09,JS15,EE19,Vel23,KW25} and the references therein. The degenerate weight case with zero right-hand side (\(f \equiv 0\)) was first studied by V\u{a}rv\u{a}ruc\u{a} and Weiss \cite{VW12} in the context of the \emph{Stokes conjecture} for two-dimensional irrotational gravity water waves. This conjecture was proposed by G. Stokes in \cite{Sto80} in 1880. It states that at any stagnation point the free surface forms a (symmetric) corner of \(120^\circ\). The conjecture was proved by Amick, Fraenkel and Toland \cite{AFT82} and by Plotnikov \cite{Plo08} under additional structural assumptions, while a new geometric proof without such assumptions was given by V\u{a}rv\u{a}ruc\u{a} and Weiss in \cite{VW11}. Their approach is also applicable to three-dimensional axisymmetric irrotational flows \cite{VW14}.

If \(f \not\equiv 0\), the problem \eqref{eq.wwp} reduces to a semilinear Bernoulli-type free boundary problem, which is typically used to describe rotational flows. The mathematical theory for this setting is now well developed and contains a wealth of results. The existence of global periodic traveling waves of finite depth was proved by Constantin and Strauss \cite{CS04}, while the existence of infinite-depth waves was established by Hur \cite{Hur06}. The waves obtained in \cite{CS04,Hur06} are monotone and symmetric in each period, and under certain conditions \cite{Var08}, the values of \(|\nabla u|\) at the stagnation points can be arbitrarily close to zero. It is therefore expected that rotational waves also form a \(120^\circ\) corner at the stagnation points for many vorticity distributions. This is usually referred to as the \emph{Stokes conjecture} for rotational waves. 

The first rigorous result in this direction was obtained by V\u{a}rv\u{a}ruc\u{a} \cite{Var09}. In \cite[Theorem 5.2]{Var09}, he proved the following dichotomy: at a stagnation point where the profile is symmetric and locally monotone, the wave profile must have \textit{either} a \(120^\circ\) corner \textit{or} a horizontal tangent. Crucially, V\u{a}rv\u{a}ruc\u{a} showed that if the vorticity is non-negative near the free surface (\(f(z) \ge 0\) for \(z \in [0,z_0]\)), then the horizontal tangent alternative is excluded by the maximum principle. Thus, positive vorticity near the crest guarantees that the \(120^\circ\) Stokes conjecture holds. However, if the vorticity is strictly negative near the free surface (\(f(z) \le 0\) for \(z \in [0,z_0]\)), the maximum principle cannot rule out the horizontal tangent alternative. Moreover, the global existence of extreme waves via global bifurcation in \cite{Var09} is proved precisely under the restrictive assumption of globally negative vorticity (\(f(z) \le 0\) for all \(z \in [0,z_0]\) and \(f(0) < 0\)). It should be noted that V\u{a}rv\u{a}ruc\u{a}'s analysis in \cite{Var09} also relies on a priori structural assumptions on the free surface, as in \cite{AFT82,Plo08}.

The Stokes conjecture for rotational flows without any structural assumptions was established by V\u{a}rv\u{a}ruc\u{a} and Weiss in \cite{VW12}. Their results (cf.\ the main theorems in \cite{VW12}) yield the following dichotomy at stagnation points for rotational waves: either the wave profile has a \(120^\circ\) corner or it is a cusp point. The present work is based on the variational method developed by V\u{a}rv\u{a}ruc\u{a} and Weiss in \cite{VW12}, and we extend their results to a larger class of nonlinearities, which includes the model example \(f(z)=-z|\log z|\). 

\section{Notations and notions of solutions}\label{sec:preliminaries}

We denote by \(x\cdot y\) the Euclidean inner product in \(\R^n\times\R^n\), by \(B_r(x^0):=\{x\in\R^n:|x-x^0|<r\}\) the ball of radius \(r\) centered at \(x^0\). We abbreviate \(B_r(0)\) as \(B_r\), and denote by \(\omega_n\) the \(n\)-dimensional volume of \(B_1\). \(\mathcal{L}^n\) and \(\mathcal{H}^{n-1}\) denote the \(n\)-dimensional Lebesgue measure and the \((n-1)\)-dimensional Hausdorff measure, respectively. For a set \(E\subset\R^n\), we write \(\chi_E\) for its characteristic function.

\begin{definition}[Variational solutions]\label{def:variational_solution}
    We say that \(u\in W_{{\rm loc}}^{1,2}(\Omega)\) is a variational solution to the problem \eqref{eq.wwp} if 
    \begin{enumerate}[label=(\roman*)]
        \item \(u\in C^0(\Omega)\cap C^2(\Omega\cap\{u>0\})\), \(u\ge 0\) in \(\Omega\) and \(u\equiv 0\) in \(\Omega\cap\{x_n\le 0\}\); 
        \item For any test function \(\phi\in C_c^1(\Omega;\R^n)\), we have
        \begin{equation*}
            0=\int_\Omega \left[ \left(|\nabla u|^2-2F(u)\right)\operatorname{div}\phi - 2\nabla u D\phi \nabla u + x_n \chi_{\{u>0\}}\operatorname{div}\phi + \chi_{\{u>0\}}\phi_n \right]\,\dx{x}.
        \end{equation*} 
    \end{enumerate}
\end{definition}

We also recall the definition of weak solutions to the problem \eqref{eq.wwp}.

\begin{definition}[Weak solutions]\label{def:weak-solutions}
    We say that \(u\in W_{{\rm loc}}^{1,2}(\Omega)\cap C^2(\Omega\cap\{u>0\})\) is a weak solution of \eqref{eq.wwp} provided that 
    \begin{enumerate}[label=(\roman*)]
        \item \(u\) is a variational solution of \eqref{eq.wwp};
        \item The topological free boundary \(\partial\{u>0\}\cap\Omega\cap\{x_n>0\}\) is locally a \(C^{2,\alpha}\)-hypersurface.
    \end{enumerate}
\end{definition} 

\begin{remark}
    For any weak solution \(u\) of \eqref{eq.wwp} such that the growth condition \eqref{eq.growth_assumption} holds, \(u\) is a variational solution in the sense of Definition \ref{def:variational_solution}. Moreover, \(\chi_{\{u>0\}}\) is locally a function of bounded variation, and the total variation measure \(|\nabla\chi_{\{u>0\}}|\) satisfies 
    \[
        r^{1/2-n}\int_{B_r(y)}\sqrt{x_n}\,\dx{}|\nabla\chi_{\{u>0\}}|\le C_0
    \]
    for all \(B_r(y)\Subset\Omega\) such that \(y_n=0\) (see \cite[Lemma 3.4]{VW11}). Here for a set \(E\) of locally finite perimeter, \(|\nabla\chi_E|=\cH^{n-1}\mres\partial^*E\). In particular, if \(E\) has a \(C^1\)-boundary, this is \(\mathcal H^{n-1}\mres\partial E\).
\end{remark}

\section{The modified Weiss-type monotonicity formula}
\label{sec:modified_Weiss}

In this section we prove Proposition \ref{prop:modified_Weiss}. Throughout the section, let \(u\) be a variational solution of \eqref{eq.wwp} in the sense of Definition \ref{def:variational_solution}, let \(0\in\Sigma^u\), and assume that \(f(z)\) satisfies Assumption \ref{assumption:nonlinearity}. Moreover, it follows from the growth condition \eqref{eq.growth_assumption} that
\[
    0\le u(x)\le C|x|^{3/2}\qquad\text{ for }x\in\Omega\text{ sufficiently close to }0.
\]
Hence, after decreasing the radius if necessary, we may assume that
\begin{equation}\label{eq.smallness_u}
    0\le u(x)<z_0
    \qquad\text{for }x\in B_\delta,
\end{equation}
where \(\delta_0:=\frac12\operatorname{dist}(0,\partial\Omega)\). We may choose \(\delta_0\) small enough so that 
\begin{equation}\label{eq.choice_delta}
    \delta_0^2\mathsf P_0\le \frac n2,
\end{equation}
where \(\mathsf P_0\) is defined in \eqref{eq.P0N0}. In \cite[Corollary 6.11]{VW12}, a stronger version of the forthcoming lemma is proved under the assumption \(|f(z)|\le Cz\). We here provide a self-contained proof for the sake of completeness and to show that the weaker Assumption \ref{assumption:nonlinearity} still suffices to provide a one-sided lower bound for \(K(r)\) in terms of \(J(r)\) in \eqref{eq.Jr_weiss}. This is crucial for the proof of Proposition \ref{prop:modified_Weiss}.

\begin{lemma}\label{lem:boundary_control_Weiss_defect}
    Let \(J(r)\) be defined as in \eqref{eq.Jr_weiss}, then for a.e. \(r\in(0,\delta_0)\),
    \begin{equation}\label{eq:L2_boundary_control}
        \int_{B_r}u^2\,\dx{x}
        \le
        \frac2n rJ(r),
    \end{equation}
    and
    \begin{equation}\label{eq:K_lower_bound_autonomous}
        r^{-n-2}K(r)
        \ge
        -\left[
            \mathsf W_0+\frac 2n\bigl(
                2\mathsf P_0+(n-2)\mathsf W_0
            \bigr)
        \right]\,r^{-n-1}J(r).
    \end{equation}
    Here \(K(r)\) is defined in \eqref{eq.Kr}.
\end{lemma}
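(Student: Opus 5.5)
The plan is to prove \eqref{eq:L2_boundary_control} first, by an ODE-type comparison for the normalized boundary mass
\[
    G(t):=t^{1-n}J(t)=\int_{\partial B_1}u(tx)^2\,\dH{n-1},
\]
and then to derive \eqref{eq:K_lower_bound_autonomous} from it by a pointwise algebraic rearrangement of the integrands in \eqref{eq.Kr}. For the first part I set \(Y(t):=\int_{B_t}u^2\,\dx{x}=\int_0^t J(s)\,\dx{s}\), which is nondecreasing in \(t\).

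\emph{Step 1 (derivative of \(G\)).} For a.e.\ \(t\in(0,\delta_0)\) I expect
\[
    G'(t)=2t^{1-n}\int_{\partial B_t}u\,\nabla u\cdot\nu\,\dH{n-1}=2t^{1-n}\int_{B_t}\bigl(|\nabla u|^2-uf(u)\bigr)\,\dx{x}.
\]
The second equality uses \(\Delta u=-f(u)\) in \(\{u>0\}\) and \(u=0\) elsewhere. To justify it I would integrate \(\operatorname{div}(u\nabla u)\) over \(B_t\cap\{u>\varepsilon\}\) for a.e.\ \(\varepsilon\), and then let \(\varepsilon\to0\). The boundary term on \(\{u=\varepsilon\}\) is \(\varepsilon\int\nabla u\cdot\nu\), and it vanishes in the limit thanks to the growth bound \eqref{eq.growth_assumption}; alternatively one can quote the analogous identity from \cite{VW12}. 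I regard this regularity bookkeeping as the main technical point. Since \(uf(u)\le u f^+(u)\le\mathsf P_0u^2\) by \eqref{eq.P0N0} and \eqref{eq.smallness_u}, dropping the gradient term gives
\[
    G'(t)\ge-2\mathsf P_0t^{1-n}Y(t).
\]

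\emph{Step 2 (integration).} For \(0<t<r\), monotonicity of \(Y\) yields
\[
    G(t)\le G(r)+2\mathsf P_0Y(r)\int_t^r s^{1-n}\,\dx{s}.
\]
I then write \(Y(r)=\int_0^r t^{n-1}G(t)\,\dx{t}\) and insert this bound. Fubini handles the possible singularity at \(t=0\) when \(n=2\):
\[
    \int_0^r t^{n-1}\int_t^r s^{1-n}\,\dx{s}\,\dx{t}=\int_0^r s^{1-n}\frac{s^n}{n}\,\dx{s}=\frac{r^2}{2n}.
\]
Hence
\[
    Y(r)\le\frac{r^nG(r)}{n}+\frac{\mathsf P_0r^2}{n}Y(r)=\frac{rJ(r)}{n}+\frac{\mathsf P_0r^2}{n}Y(r).
\]
The choice \eqref{eq.choice_delta} makes the last coefficient at most \(1/2\), so absorbing it gives \(Y(r)\le\frac2n rJ(r)\), which is \eqref{eq:L2_boundary_control}.

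\emph{Step 3 (lower bound for \(K\)).} For the boundary integrand, \eqref{eq.W0} gives \(2F(u)-uf(u)\ge-\mathsf W_0u^2\), so the boundary term of \(K\) is at least \(-\mathsf W_0\,rJ(r)\). For the bulk integrand I use the identity
\[
    (n-2)uf(u)-2nF(u)=(n-2)\bigl(uf(u)-2F(u)\bigr)-4F(u).
\]
The first summand is bounded below via \eqref{eq.W0}. For the second, \(F(z)\le\int_0^z f^+\le\mathsf P_0z^2/2\), so \(-4F(u)\ge-2\mathsf P_0u^2\). Together these give a bulk contribution of at least \(-\bigl(2\mathsf P_0+(n-2)\mathsf W_0\bigr)\int_{B_r}u^2\,\dx{x}\). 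Applying \eqref{eq:L2_boundary_control} to this integral and multiplying the resulting bound on \(K(r)\) by \(r^{-n-2}\) gives \eqref{eq:K_lower_bound_autonomous}.
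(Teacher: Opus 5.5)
Your proposal is correct and follows essentially the paper's route. Integrating \(G'(t)\) against \(t^{n-1}\) and applying Fubini is precisely how the identity \(rJ(r)=\int_{B_r}\bigl[nu^2+(|\nabla u|^2-uf(u))(r^2-|x|^2)\bigr]\,\dx{x}\) of \cite[Lemma 6.9]{VW12} (which the paper simply quotes) is derived, and you arrive at the identical inequality \((n-\mathsf P_0r^2)\int_{B_r}u^2\,\dx{x}\le rJ(r)\). Step~3 also matches the paper; your splitting \((n-2)uf(u)-2nF(u)=(n-2)\bigl(uf(u)-2F(u)\bigr)-4F(u)\) with \(F(z)\le\mathsf P_0z^2/2\) makes explicit that the bulk bound uses \eqref{eq.W0} as well as \eqref{eq.P0N0}, a point the paper leaves implicit.
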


\begin{proof}
    We first prove \eqref{eq:L2_boundary_control}. It follows from Lemma 6.9 in \cite{VW12} that for a.e. \(r\in(0,\delta_0)\),
    \[
        rJ(r)
        =
        \int_{B_r}
        \left[
            nu^2+
            \bigl(|\nabla u|^2-u f(u)\bigr)
            \bigl(r^2-|x|^2\bigr)
        \right]\,\dx{x}.
    \]
    Since \(u f(u)=u f^+(u)-u f^-(u)\le u f^+(u)\), \eqref{eq.smallness_u} and \eqref{eq.P0N0} in Assumption \ref{assumption:nonlinearity} imply that \(u f^+(u)\le \mathsf P_0 u^2\) in \(B_\delta\). Using also \(|\nabla u|^2\ge0\) and \(0\le r^2-|x|^2\le r^2\), we obtain
    \[
        rJ(r)
        \ge
        \int_{B_r}
        \left[
            nu^2-\mathsf P_0 u^2
            \bigl(r^2-|x|^2\bigr)
        \right]\,\dx{x}  
        \ge
        \bigl(
            n-r^2\,\mathsf P_0
        \bigr)
        \int_{B_r}u^2\,\dx{x}.
    \]
    By the choice of \(\delta_0\) which satisfies \eqref{eq.choice_delta}, we have \(r^2\mathsf P_0\le n/2\) for \(0<r<\delta_0\). Therefore \eqref{eq:L2_boundary_control} is proved.

    We now prove \eqref{eq:K_lower_bound_autonomous}. First, it follows from \eqref{eq.W0} in Assumption \ref{assumption:nonlinearity} that
    \begin{equation}\label{eq:K-integrand-lower-bounds-1}
        2F(z)-zf(z)\ge -[2F(z)-zf(z)]^-\ge -\mathsf W_0\,z^2,
    \end{equation}
    and from \eqref{eq.P0N0} in Assumption \ref{assumption:nonlinearity} that
    \begin{equation}\label{eq:K-integrand-lower-bounds-2}
        (n-2)zf(z)-2nF(z)\ge-\bigl[
            (n-2)zf(z)-2nF(z)
        \bigr]^-\ge -\bigl(
            2\mathsf P_0+(n-2)\mathsf W_0
        \bigr)z^2,
    \end{equation}
    for all \(0<z<z_0\). Applying \eqref{eq:K-integrand-lower-bounds-1} and \eqref{eq:K-integrand-lower-bounds-2} with \(z=u(x)\) and using \eqref{eq.smallness_u}, we deduce from the definition of \(K(r)\) in \eqref{eq.Kr} that for a.e. \(r\in(0,\delta_0)\),
    \[
        K(r)\ge-\mathsf W_0\, r\int_{\partial B_r}u^2\,\dH{n-1}-(2\mathsf P_0+(n-2)\mathsf W_0)\int_{B_r}u^2\,\dx{x}.
    \]
    Using \eqref{eq:L2_boundary_control}, we get
    \[
        K(r)
        \ge
        -\left[
            \mathsf W_0+\frac 2n\bigl(
                2\mathsf P_0+(n-2)\mathsf W_0
            \bigr)
        \right]rJ(r).
    \]
    Dividing by \(r^{n+2}\), we obtain \eqref{eq:K_lower_bound_autonomous}.
\end{proof}
We next introduce an identity that will be used in the proof of Proposition \ref{prop:modified_Weiss}. 

\begin{lemma}\label{lem:boundary_correction_derivative}
    Let \(J(r)\) be defined as in \eqref{eq.Jr_weiss}. Then for a.e. \(r\in(0,\delta_0)\), one has
    \begin{equation}\label{eq:boundary_correction_derivative}
    \begin{aligned}
        \frac{\dx{}}{\dx r}\left(r^{-n-2}J(r)\right)
        =
        \frac2r
        \bigg[
            M(r)-M(0^+)
            +
            r^{-n-1}
            \int_{B_r}
            x_n^+
            \bigl(1-\chi_{\{u>0\}}\bigr)\,\dx{x}
        \bigg].
    \end{aligned}
    \end{equation}
    Moreover,
    \begin{equation}\label{eq:Psi_zero_limit}
        \lim_{r\to0^+}\Psi_\beta(r)=0.
    \end{equation}
\end{lemma}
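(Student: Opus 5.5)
The identity comes from differentiating \(r^{-n-2}J(r)\) directly and then rewriting the boundary term \(\int_{\partial B_r}u\,\nabla u\cdot\nu\) through the equation. The limit \eqref{eq:Psi_zero_limit} then follows from the defining property of \(\Sigma^u\) together with the growth bound on \(u\).

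\textbf{Derivative of \(J\).} Since \(u\in W^{1,2}_{\rm loc}\) is continuous, scaling to the unit sphere gives, for a.e. \(r\),
\[
    J'(r)=\frac{n-1}{r}J(r)+2\int_{\partial B_r}u\,\nabla u\cdot\nu\,\dH{n-1}.
\]
Hence
\[
    \frac{\dx{}}{\dx r}\bigl(r^{-n-2}J\bigr)=r^{-n-2}\Bigl(2\int_{\partial B_r}u\,\nabla u\cdot\nu-\frac{3}{r}J\Bigr).
\]

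\textbf{Rewriting the boundary term.} Next I use the integration-by-parts identity
\[
    \int_{\partial B_r}u\,\nabla u\cdot\nu=\int_{B_r}\bigl(|\nabla u|^2-uf(u)\bigr)\,\dx{x}
\]
for a.e. \(r\). It is obtained by testing \(\Delta u=-f(u)\) in \(\{u>0\}\) against \(u\) and approximating by \(\max(u-\epsilon,0)\), exactly as in the proof of \cite[Lemma 6.9]{VW12}, which was already invoked above. With \(I(r)\) as in Theorem \ref{thm:monotonicity_formula}, this gives
\[
    2\int_{\partial B_r}u\,\nabla u\cdot\nu=2I(r)-2\int_{B_r}x_n\chi_{\{u>0\}}\,\dx{x}.
\]
Substituting,
\[
    \frac{\dx{}}{\dx r}\bigl(r^{-n-2}J\bigr)=\frac2r\Bigl[r^{-n-1}I-\frac32r^{-n-2}J-r^{-n-1}\int_{B_r}x_n^+\chi_{\{u>0\}}\Bigr].
\]
Because \(u\equiv0\) in \(\{x_n\le0\}\), we have \(x_n\chi_{\{u>0\}}=x_n^+\chi_{\{u>0\}}\). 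Scaling gives \(r^{-n-1}\int_{B_r}x_n^+\,\dx{x}=\int_{B_1}x_n^+=M(0^+)\). Writing \(-\chi=(1-\chi)-1\) therefore produces exactly \(M(r)-M(0^+)+r^{-n-1}\int_{B_r}x_n^+(1-\chi_{\{u>0\}})\), which is \eqref{eq:boundary_correction_derivative}.

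\textbf{The limit.} Since \(0\in\Sigma^u\) and \(M(0^+)\) exists (Theorem \ref{thm:monotonicity_formula} plus local integrability of \(r^{-n-2}K\)), we have \(M(r)-M(0^+)\to0\). It remains to show \(r^{-n-2+\beta}J(r)\to0\). The bound \(u\le C|x|^{3/2}\) near \(0\) gives \(J(r)\le C r^{n-1}r^3=Cr^{n+2}\). Hence \(r^{-n-2+\beta}J(r)\le Cr^\beta\to0\), since \(\beta>0\).

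The main obstacle is justifying the integration-by-parts identity for a.e. \(r\) when \(u\) is only \(C^2\) in the positive phase. I expect it to reduce to the approximation already used for \cite[Lemma 6.9]{VW12}, which relies on \(f(0)=0\) and the continuity of \(f\) at \(0\) from Assumption \ref{assumption:nonlinearity}.
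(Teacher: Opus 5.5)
Your proposal is correct and follows essentially the same route as the paper. Like the paper, you differentiate $r^{-n-2}J(r)$, substitute the energy identity $\int_{\partial B_r}u\,\nabla u\cdot\nu=\int_{B_r}(|\nabla u|^2-uf(u))$, and use $r^{-n-1}\int_{B_r}x_n^+=\int_{B_1}x_n^+=M(0^+)$ for points of $\Sigma^u$; you also get the limit from $J(r)\le Cr^{n+2}$ as the paper does, and routing the computation through $I(r)$ rather than writing $M(r)-M(0^+)$ directly is only a cosmetic difference.
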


\begin{proof}
    Assume without loss of generality that \(x^0=0\), a direct computation gives
    \begin{equation}\label{eq:scaled_J_derivative_raw}
    \begin{aligned}
        \frac{\dx{}}{\dx r}\left(r^{-n-2}J(r)\right)
        &=
        r^{-n-3}
        \left[
        2r\int_{\partial B_r}u\,\nabla u\cdot\nu\,\dH{n-1}
        -
        3J(r)
        \right]\\
        &=
        \frac2r
        \left[
            r^{-n-1}
            \int_{\partial B_r}u\,\nabla u\cdot\nu\,\dH{n-1}
            -
            \frac32 r^{-n-2}J(r)
        \right].
    \end{aligned}
    \end{equation}
    Using the energy identity (cf. \cite[equation (3.10)]{VW12}),
    \begin{equation}\label{eq.energy_identity}
        \int_{\partial B_r}u\,\nabla u\cdot\nu\,\dH{n-1}
        =
        \int_{B_r}
        \bigl(|\nabla u|^2-u f(u)\bigr)\,\dx{x}.
    \end{equation}
    The equation \eqref{eq:scaled_J_derivative_raw} becomes
    \begin{equation}\label{eq:scaled_J_derivative_energy}
        \begin{aligned}
            \frac{\dx{}}{\dx r}\left(r^{-n-2}J(r)\right)
            =
            \frac2r
            \left[
                r^{-n-1}
                \int_{B_r}
                \bigl(|\nabla u|^2-u f(u)\bigr)\,\dx{x}
                -
                \frac32 r^{-n-2}J(r)
            \right].
        \end{aligned}
    \end{equation}

    Since \(x^0\in\Sigma^u\), we have 
    \[
        M(0^+)=\int_{B_1}x_n^+\,\dx{x}=r^{-n-1}\int_{B_r}x_n^+\,\dx{x}.
    \]
    Using the definition of \(M(r)\) in \eqref{eq.Phir}, we therefore obtain
    \begin{equation}\label{eq:M_minus_M0_identity}
        \begin{aligned}
            M(r)-M(0^+)
            &=
            r^{-n-1}
            \int_{B_r}
            \bigl(|\nabla u|^2-u f(u)\bigr)\,\dx{x}
            -
            \frac32 r^{-n-2}J(r) \\
            &\quad
            -
            r^{-n-1}
            \int_{B_r}
            x_n^+
            \bigl(1-\chi_{\{u>0\}}\bigr)\,\dx{x}.
        \end{aligned}
    \end{equation}
    Combining \eqref{eq:scaled_J_derivative_energy} and
    \eqref{eq:M_minus_M0_identity} proves
    \eqref{eq:boundary_correction_derivative}.

    It remains to prove \eqref{eq:Psi_zero_limit}. By the definition of \(M(0^+)\), we have \(\lim_{r\to0^+}\bigl(M(r)-M(0^+)\bigr)=0\). Moreover, the growth condition \eqref{eq.growth_assumption} gives \(J(r)=\int_{\partial B_r}u^2\le Cr^{n+2}\). Thus \(r^{-n-2+\beta}J(r)\to 0\) as \(r\to 0^+\). Using the definition \eqref{eq:modified_Weiss}, we conclude that the limit \(\lim_{r\to0^+}\Psi_\beta(r)=0\) and \eqref{eq:Psi_zero_limit} holds.
\end{proof}

The following lemma is the key step in proving the monotonicity of \(\Psi_\beta(r)\).

\begin{lemma}[Differential inequality for \(\Psi_\beta\)]\label{lem:Psi_beta_differential_inequality}
    For every \(\beta\in(0,1)\), there exists \(r_\beta\in(0,\delta_0)\) such that, for a.e.
    \(r\in(0,r_\beta)\),
    \begin{equation}\label{eq:Psi_beta_differential_inequality}
        \begin{aligned}
            \Psi_\beta'(r)
            \ge
            2r^{\beta-1}\Psi_\beta(r)
            +
            2r^{-n-1}
            \int_{\partial B_r}
            \left(
                \nabla u\cdot\nu-\frac{3}{2}\frac{u}{r}
            \right)^2
            \,\dH{n-1}.
        \end{aligned}
    \end{equation}
\end{lemma}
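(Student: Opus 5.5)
The plan is to differentiate \(\Psi_\beta\) term by term, combining Theorem \ref{thm:monotonicity_formula}, Lemma \ref{lem:boundary_control_Weiss_defect} and Lemma \ref{lem:boundary_correction_derivative}, and then check that the leftover terms carry a nonnegative coefficient for small \(r\). Take \(x^0=0\) and write \(Q(r):=r^{-n-2}J(r)\ge 0\). Since \(M(0^+)\) is a constant, for a.e. \(r\) we have
\[
    \Psi_\beta'(r)=M'(r)+\frac{\dx{}}{\dx r}\bigl(r^\beta Q(r)\bigr)
    =M'(r)+\beta r^{\beta-1}Q(r)+r^\beta Q'(r).
\]

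\emph{Step 1 (the \(M'\) term).} By \eqref{mono.Wei.model1},
\[
    M'(r)=2r^{-n-1}\int_{\partial B_r}\Bigl(\nabla u\cdot\nu-\tfrac32\tfrac ur\Bigr)^2\,\dH{n-1}+r^{-n-2}K(r).
\]
By \eqref{eq:K_lower_bound_autonomous}, \(r^{-n-2}K(r)\ge -C_*r^{-n-1}J(r)=-C_*\,r\,Q(r)\), with \(C_*:=\mathsf W_0+\frac2n(2\mathsf P_0+(n-2)\mathsf W_0)\). This is the only place Assumption \ref{assumption:nonlinearity} enters, and it is used only through the one-sided bound on \(K\).

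\emph{Step 2 (the \(Q'\) term).} By \eqref{eq:boundary_correction_derivative}, and since \(x_n^+(1-\chi_{\{u>0\}})\ge0\), we get \(Q'(r)\ge\frac2r\bigl(M(r)-M(0^+)\bigr)\). By the definition \eqref{eq:modified_Weiss}, \(M(r)-M(0^+)=\Psi_\beta(r)-r^\beta Q(r)\). Hence
\[
    r^\beta Q'(r)\ge 2r^{\beta-1}\Psi_\beta(r)-2r^{2\beta-1}Q(r).
\]
This produces exactly the term \(2r^{\beta-1}\Psi_\beta\) that appears in \eqref{eq:Psi_beta_differential_inequality}.

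\emph{Step 3 (collecting terms).} Adding Steps 1 and 2 gives
\[
    \Psi_\beta'(r)\ge 2r^{\beta-1}\Psi_\beta(r)+2r^{-n-1}\int_{\partial B_r}\Bigl(\nabla u\cdot\nu-\tfrac32\tfrac ur\Bigr)^2\,\dH{n-1}
    +Q(r)\,r^{\beta-1}\bigl[\beta-2r^{\beta}-C_*r^{2-\beta}\bigr].
\]
Because \(\beta\in(0,1)\), both \(r^\beta\) and \(r^{2-\beta}\) tend to \(0\). So we can choose \(r_\beta\in(0,\delta_0)\) with \(2r_\beta^\beta+C_*r_\beta^{2-\beta}\le\beta\). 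On \((0,r_\beta)\) the bracket is then nonnegative, and since \(Q\ge0\) the last term can be dropped, which is \eqref{eq:Psi_beta_differential_inequality}.

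The main point to watch is the bookkeeping in Step 3. The cost of trading \(M-M(0^+)\) for \(\Psi_\beta\) is the term \(-2r^{2\beta-1}Q\), and the nonlinear error is \(-C_*rQ\). Both must be dominated by the gain \(\beta r^{\beta-1}Q\) coming from differentiating the weight \(r^\beta\). This works precisely because \(\beta-1<\min\{2\beta-1,1\}\), which is why \(\beta<1\) is needed.

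The remaining justifications are routine. Differentiating \(J\) a.e. uses the \(W^{1,2}\)-regularity of \(u\), as in the proof of Lemma \ref{lem:boundary_correction_derivative}. The a.e. validity of \eqref{eq:K_lower_bound_autonomous} and \eqref{eq:boundary_correction_derivative} has already been established.
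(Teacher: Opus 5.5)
Your proposal is correct and follows the paper's proof essentially line by line. It uses the one-sided bound on \(K\) from Lemma \ref{lem:boundary_control_Weiss_defect} and the identity \eqref{eq:boundary_correction_derivative} with the nonnegative \(x_n^+(1-\chi_{\{u>0\}})\) term discarded. It then substitutes \(M(r)-M(0^+)=\Psi_\beta(r)-r^{\beta-n-2}J(r)\) and absorbs the remainder by choosing \(r_\beta\) with \(2r_\beta^\beta+C_*r_\beta^{2-\beta}\le\beta\). One small correction to your closing remark: \(\beta-1<\min\{2\beta-1,1\}\) only requires \(0<\beta<2\), so this step does not by itself force \(\beta<1\).
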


\begin{proof}
    Recall the following monotonicity formula \eqref{mono.Wei.model1} in Theorem \ref{thm:monotonicity_formula},
    \[
        M'(r)
        =
        2r^{-n-1}
        \int_{\partial B_r}
        \left(
            \nabla u\cdot\nu-\frac{3}{2}\frac{u}{r}
        \right)^2
        \,\dH{n-1}
        +
        r^{-n-2}K(r)
    \]
    for a.e. \(r\in(0,\delta_0)\). Using Lemma \ref{lem:boundary_control_Weiss_defect}, we overestimate the last additional term to obtain
    \begin{equation}\label{eq:M_prime_lower_bound}
        M'(r)
        \ge
        2r^{-n-1}
        \int_{\partial B_r}
        \left(
            \nabla u\cdot\nu-\frac{3}{2}\frac{u}{r}
        \right)^2
        \,\dH{n-1}
        -
        \mathsf W\,r\bigl(r^{-n-2}J(r)\bigr),
    \end{equation}
    where \(\mathsf W:=[\mathsf W_0+\frac 2n\bigl(2\mathsf P_0+(n-1)\mathsf W_0\bigr)]\) is the constant defined as in \eqref{eq:K_lower_bound_autonomous}.

    Recall \eqref{eq:modified_Weiss}, differentiating it with respect to \(r\) and using Lemma \ref{lem:boundary_correction_derivative}, we find
    \begin{equation}\label{eq:Psi_prime_expand}
        \begin{aligned}
            \Psi_\beta'(r)
            &=
            M'(r)
            +
            \beta r^{\beta-1}\bigl(r^{-n-2}J(r)\bigr)
            +
            r^\beta
            \frac{\dx{}}{\dx r}
            \left(r^{-n-2}J(r)\right)  \\
            &=
            M'(r)
            +
            \beta r^{\beta-1}\bigl(r^{-n-2}J(r)\bigr) \\
            &\quad
            +
            2r^{\beta-1}
            \bigg[
                M(r)-M(0^+)
                +
                r^{-n-1}
                \int_{B_r}
                x_n^+
                \bigl(1-\chi_{\{u>0\}}\bigr)\,\dx{x}
            \bigg].
        \end{aligned}
    \end{equation}
    Since \(M(r)-M(0^+)=\Psi_\beta(r)-r^\beta\bigl(r^{-n-2}J(r)\bigr)\), we deduce from \eqref{eq:M_prime_lower_bound} and \eqref{eq:Psi_prime_expand} that
    \begin{equation}\label{eq:Psi_prime_before_absorption}
        \begin{aligned}
            \Psi_\beta'(r)
            &\ge
            2r^{-n-1}
            \int_{\partial B_r}
            \left(
                \nabla u\cdot\nu-\frac{3}{2}\frac{u}{r}
            \right)^2
            \,\dH{n-1}
            +
            2r^{\beta-1}\Psi_\beta(r) \\
            &\quad
            +
            2r^{\beta-1}
            r^{-n-1}
            \int_{B_r}
            x_n^+
            \bigl(1-\chi_{\{u>0\}}\bigr)\,\dx{x} \\
            &\quad
            +
            \left(
                \beta r^{\beta-1}
                -
                2r^{2\beta-1}
                -
                \mathsf W\, r
            \right)
            \bigl(r^{-n-2}J(r)\bigr).
        \end{aligned}
    \end{equation}
    The third term on the right-hand side is nonnegative. Moreover, we can rewrite the terms of the last term in the bracket as
    \[
        \beta r^{\beta-1}
        -
        2r^{2\beta-1}
        -
        \mathsf W\, r
        =
        r^{\beta-1}
        \left(
            \beta
            -
            2r^\beta
            -
            \mathsf W\, r^{2-\beta}
        \right).
    \]
    Since \(\beta\in(0,1)\), we may choose \(r_\beta\in(0,\delta_0)\) so small that \(2r^\beta+\mathsf W r^{2-\beta}\le\beta\) for \(0<r<r_\beta\). Therefore the last term on the right hand side in \eqref{eq:Psi_prime_before_absorption} is nonnegative for \(0<r<r_\beta\). This proves \eqref{eq:Psi_beta_differential_inequality}.
\end{proof}

We now prove Proposition \ref{prop:modified_Weiss}.
\begin{proof}[Proof of Proposition \ref{prop:modified_Weiss}]
    By Lemma \ref{lem:Psi_beta_differential_inequality}, for a.e.
    \(r\in(0,r_\beta)\),
    \[
        \Psi_\beta'(r)-2r^{\beta-1}\Psi_\beta(r)
        \ge
        2r^{-n-1}
        \int_{\partial B_r}
        \left(
            \nabla u\cdot\nu-\frac{3}{2}\frac{u}{r}
        \right)^2
        \,\dH{n-1}
        \ge0.
    \]
    Multiplying by \(e^{-\frac{2r^\beta}{\beta}}\), we obtain
    \[
        \frac{\dx{}}{\dx r}
        \left(
            e^{-\frac{2r^\beta}{\beta}}\Psi_\beta(r)
        \right)
        =
        e^{-\frac{2r^\beta}{\beta}}
        \left[
            \Psi_\beta'(r)-2r^{\beta-1}\Psi_\beta(r)
        \right]
        \ge0
    \]
    for a.e. \(r\in(0,r_\beta)\). Hence \(r\mapsto e^{-\frac{2r^\beta}{\beta}}\Psi_\beta(r)\) is nondecreasing on \((0,r_\beta)\). By Lemma \ref{lem:boundary_correction_derivative}, \(\lim_{r\to0^+}\Psi_\beta(r)=0\). Therefore, for \(0<r<r_\beta\),
    \[
        e^{-\frac{2r^\beta}{\beta}}\Psi_\beta(r)
        \ge
        \lim_{t\to0^+}
        e^{-\frac{2t^\beta}{\beta}}\Psi_\beta(t)
        =
        0.
    \]
    Since the exponential factor is positive, we obtain \(\Psi_\beta(r)\ge0\) for \(0<r<r_\beta\). The identity \(\Psi_\beta(0^+)=0\) is exactly \eqref{eq:Psi_zero_limit}. This concludes the proof.
\end{proof} 

\section{The modified Almgren-type frequency formula}
\label{sec:modified_Almgren}

Throughout this section, let \(u\) be a variational solution to the problem \eqref{eq.wwp}, let \(0\in\Sigma^u\) and assume that \(f\) satisfies Assumption \ref{assumption:nonlinearity}. Recall \(D(r)\), \(V(r)\), \(H_\beta(r)\) and \(\Psi_\beta(r)\) defined in \eqref{eq.freqDr}, \eqref{eq.Vr}, \eqref{eq.Hbetar} and \eqref{eq:modified_Weiss}, respectively. We also define 
\begin{equation*}
    E_{x^0,u}(r)=E(r)=
    2r^{-n-1}\int_{\partial B_r}
    \left(
        \nabla u\cdot\nu 
        -
        \frac 32 \frac{u}{r}
    \right)^2\,\dH{n-1},
\end{equation*}
which is the square term in \eqref{mono.Wei.model1}. Then the differential inequality obtained in Lemma \ref{lem:Psi_beta_differential_inequality} (cf. \eqref{eq:Psi_beta_differential_inequality}) can be rewritten as 
\begin{equation}\label{eq:Psi_differential_from_Weiss}
    \Psi_\beta'(r)
    \ge 
    2r^{\beta-1}\Psi_\beta(r)
    +E(r).
\end{equation} 

Let \(\delta_0:=\tfrac 12\dist(0,\partial\Omega)\), then we summarize some basic identities that will be used in the proof of Proposition \ref{prop:modified_Almgren} in the following Lemma.

\begin{lemma}[Basic identities] 
    For a.e. \(r\in(0,\delta_0)\), the following identities hold: 
    \begin{equation}\label{eq:J_log_derivative} 
        \frac{(r^{-n-2}J(r))'}{r^{-n-2}J(r)} = \frac2r \left( D(r)-\frac32 \right), 
    \end{equation} 
    \begin{equation}\label{eq:D_H_V_identity} 
        D(r)-\frac32 = H_\beta(r)+V(r)-r^\beta, 
    \end{equation} and 
    \begin{equation}\label{eq:H_preliminary_inequality} 
        H_\beta'(r) 
        \ge 
        2r^{\beta-1}H_\beta(r) 
        +  \frac{E(r)}{r^{-n-2}J(r)} 
        - H_\beta(r) \frac{(r^{-n-2}J(r))'}{r^{-n-2}J(r)}. 
    \end{equation}
\end{lemma}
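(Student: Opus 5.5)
The three identities are essentially bookkeeping, and I would prove them in the order stated, taking $x^0=0$ throughout.

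\emph{Identity \eqref{eq:J_log_derivative}.} I start from the raw derivative \eqref{eq:scaled_J_derivative_raw} in the proof of Lemma \ref{lem:boundary_correction_derivative}:
\[
(r^{-n-2}J(r))' = r^{-n-3}\Bigl[2r\int_{\partial B_r}u\,\nabla u\cdot\nu\,\dH{n-1} - 3J(r)\Bigr].
\]
Dividing by $r^{-n-2}J(r)$ and recognizing $D(r)=r\int_{\partial B_r}u\nabla u\cdot\nu/J(r)$ from \eqref{eq.freqDr} gives $\frac1r(2D(r)-3)=\frac2r(D(r)-\frac32)$. This quotient requires $J(r)>0$ for $r$ small. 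That holds since $0\in\partial\{u>0\}$: if $J(r)=0$, then $u\equiv 0$ on $\partial B_r$. By \eqref{eq:L2_boundary_control}, $u\equiv0$ in $B_r$, which contradicts $0\in\partial\{u>0\}$. I expect this positivity check to be the only nontrivial point.

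\emph{Identity \eqref{eq:D_H_V_identity}.} I use \eqref{eq:boundary_correction_derivative}. Its right-hand side is $\frac2r[M(r)-M(0^+)+r^{-n-2}J(r)V(r)]$, because
\[
r^{-n-1}\int_{B_r}x_n^+(1-\chi_{\{u>0\}})=r^{-n-2}J(r)\,V(r)
\]
by \eqref{eq.Vr}. Dividing by $r^{-n-2}J(r)$ and comparing with \eqref{eq:J_log_derivative} yields
\[
D(r)-\tfrac32=\frac{M(r)-M(0^+)}{r^{-n-2}J(r)}+V(r).
\]
By \eqref{eq.Hbetar}, the first term on the right equals $H_\beta(r)-r^\beta$, which gives the claim.

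\emph{Inequality \eqref{eq:H_preliminary_inequality}.} Write $H_\beta=\Psi_\beta/G$ with $G:=r^{-n-2}J$. Both factors are absolutely continuous, since $J\in W^{1,1}_{\rm loc}$ and $M$ satisfies \eqref{mono.Wei.model1}. The quotient rule gives
\[
H_\beta'=\frac{\Psi_\beta'}{G}-\frac{\Psi_\beta}{G}\,\frac{G'}{G}.
\]
Inserting \eqref{eq:Psi_differential_from_Weiss} into the first term, and using $G>0$, I get
\[
\frac{\Psi_\beta'}{G}\ge 2r^{\beta-1}H_\beta+\frac{E}{G}.
\]
Replacing $\Psi_\beta/G$ by $H_\beta$ in the second term yields exactly \eqref{eq:H_preliminary_inequality} for a.e. $r\in(0,r_\beta)$, which is the range where Lemma \ref{lem:Psi_beta_differential_inequality} applies. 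The first two identities hold on all of $(0,\delta_0)$.
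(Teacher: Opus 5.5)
Your proposal is correct and follows the paper's proof step for step. You divide \eqref{eq:scaled_J_derivative_raw} by $r^{-n-2}J(r)$, rewrite $M(r)-M(0^+)$ by passing through \eqref{eq:boundary_correction_derivative} (which just packages the paper's ingredients \eqref{eq:M_minus_M0_identity} and \eqref{eq.energy_identity}), and apply the quotient rule with \eqref{eq:Psi_differential_from_Weiss}. Two of your additions are correct refinements that the paper leaves implicit: the check $J(r)>0$ via \eqref{eq:L2_boundary_control}, and restricting the third inequality to a.e.\ $r\in(0,r_\beta)$. The quotient rule here only needs pointwise a.e.\ differentiability, so your absolute-continuity remark is not required; its justification via \eqref{mono.Wei.model1} would be loose anyway.
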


\begin{proof}
    The first identity follows by dividing the identity \eqref{eq:scaled_J_derivative_raw} by \(r^{-n-2}J(r)\) and the energy identity in \eqref{eq.energy_identity}. We next prove \eqref{eq:D_H_V_identity}, since \(x^0\in\Sigma^u\) we have \(M(0^+)=r^{-n-1}\int_{B_r}x_n^+\,\dx{x}\). Recalling the identities \eqref{eq:M_minus_M0_identity} and \eqref{eq.energy_identity}, we have 
    \[
        M(r)-M(0^+)=(r^{-n-2}J(r))\cdot\left( D(r) - \frac 32 \right) - r^{-n-1}\int_{B_r}x_n^+(1-\chi_{\{u>0\}})\,\dx{x}.
    \]
    Using the definitions of \(\Psi_\beta(r)\), \(H_\beta(r)\) and \(V(r)\), we obtain 
    \[
        H_\beta(r) = \frac{M(r)-M(0^+)}{r^{-n-2}J(r)} + r^\beta = D(r) - \frac 32 - V(r) + r^\beta,
    \]
    which is exactly \eqref{eq:D_H_V_identity}. Finally, differentiating \(H_\beta(r)\) and using \eqref{eq:Psi_differential_from_Weiss}, we obtain 
    \[ 
        \begin{aligned} 
            H_\beta'(r) &= \frac{\Psi_\beta'(r)}{r^{-n-2}J(r)} - H_\beta(r) \frac{(r^{-n-2}J(r))'}{r^{-n-2}J(r)} \\ 
            &\ge 2r^{\beta-1} \frac{\Psi_\beta(r)}{r^{-n-2}J(r)} + \frac{E(r)}{r^{-n-2}J(r)} - H_\beta(r) \frac{(r^{-n-2}J(r))'}{r^{-n-2}J(r)}  \\ 
            &= 2r^{\beta-1}H_\beta(r) +  \frac{E(r)}{r^{-n-2}J(r)} - H_\beta(r) \frac{(r^{-n-2}J(r))'}{r^{-n-2}J(r)}. 
        \end{aligned} 
    \]
    This proves \eqref{eq:H_preliminary_inequality}.
\end{proof}

\begin{proof}[Proof of Proposition \ref{prop:modified_Almgren}]
    We first rewrite the quotient \(E(r)/r^{-n-2}J(r)\) as
    \[ 
        \frac{E(r)}{r^{-n-2}J(r)} = \frac{2}{rJ(r)} \int_{\partial B_r} \left( 
            r\nabla u\cdot\nu-\frac32u 
        \right)^2 \,\dH{n-1}.
    \] 
    We expand 
    \[ 
        r\nabla u\cdot\nu-\frac32u 
        = 
        \left[ r\nabla u\cdot\nu-D(r)u \right] 
        + \left( D(r)-\frac32 \right)u.
    \] 
    Note that the cross term vanishes, since
    \[ 
        \int_{\partial B_r} u \left[ r\nabla u\cdot\nu-D(r)u \right] \,\dH{n-1} = r\int_{\partial B_r}u\nabla u\cdot\nu\,\dH{n-1} - D(r)J(r) =0.
    \] 
    Therefore 
    \begin{equation}\label{eq:E_over_J_radial_split} 
        \frac{E(r)}{r^{-n-2}J(r)} 
        = 
        \frac{2}{rJ(r)} \int_{\partial B_r} \left[ 
            r\nabla u\cdot \nu-D(r)u 
        \right]^2 \,\dH{n-1} + \frac2r \left( D(r)-\frac32 \right)^2 . 
    \end{equation} Combining \eqref{eq:H_preliminary_inequality}, \eqref{eq:J_log_derivative}, and \eqref{eq:E_over_J_radial_split}, we get 
    \[ 
        \begin{aligned} 
            H_\beta'(r) &\ge 
            \frac{2}{rJ(r)} \int_{\partial B_r} \left[ r\nabla u\cdot\nu-D(r)u \right]^2 \,\dH{n-1} \\ 
            &\quad 
            + \frac2r \left[ \left(D(r)-\frac32\right)^2 - \left(D(r)-\frac32\right)H_\beta(r) + r^\beta H_\beta(r) \right]. 
        \end{aligned} 
    \] 
    Using \eqref{eq:D_H_V_identity}, set \(Q(r):=D(r)-\frac32 = H_\beta(r)+V(r)-r^\beta \). Then a direct computation shows that 
    \[
        [Q(r)]^2-Q(r)H_\beta(r)+r^\beta H_\beta(r) = V(r)H_\beta(r) + \left( r^\beta-V(r) \right)^2. 
    \] 
    Substituting this into the previous inequality gives \eqref{eq.modified_Almgren}.
\end{proof}

As an application of the modified frequency formula \eqref{eq.modified_Almgren}, we have

\begin{corollary}\label{cor:weak_frequency_monotonicity}
    Fix \(\beta\in(0,1)\). Then there exists \(r_\beta>0\) sufficiently small such that for a.e. \(r\in(0,r_\beta)\), 
    \begin{equation}\label{eq:weak_frequency_formula} 
        H_\beta'(r) \ge \frac2rV(r)H_\beta(r) + \frac2r \left( r^\beta-V(r) \right)^2 . 
    \end{equation} 
    Consequently, \(H_\beta\) is nondecreasing on \((0,r_\beta)\), and the finite right limit 
    \[ 
        H_\beta(0^+):=\lim_{r\to0^+}H_\beta(r) 
    \] 
    exists. Moreover, this limit is independent of \(\beta\in(0,1)\) so we denote this limit simply by \(H(0^+)\).
    
    Finally, for every \(r\in(0,r_\beta)\), 
    \begin{equation}\label{eq:radial_defect_integrability} 
        \int_0^r \frac1{tJ(t)} \int_{\partial B_t} \left[ t\nabla u\cdot\nu-D(t)u \right]^2 \,\dH{n-1}\,\dx{t} <+\infty. 
    \end{equation} 
\end{corollary}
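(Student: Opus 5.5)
The plan is to read everything off the modified frequency formula \eqref{eq.modified_Almgren} of Proposition \ref{prop:modified_Almgren}, using the sign information already available. Take \(r_\beta\) as in Proposition \ref{prop:modified_Weiss} (equivalently Lemma \ref{lem:Psi_beta_differential_inequality}). On \((0,r_\beta)\) we have \(\Psi_\beta\ge0\), hence \(H_\beta\ge0\) because \(J>0\); we also have \(V\ge0\) by definition \eqref{eq.Vr}. Here \(J(r)>0\) for small \(r\), since otherwise \(u\equiv0\) on \(\partial B_r\) and the quotients would not be defined; this is the standing convention behind \eqref{eq.Hbetar}. Dropping the nonnegative first term in \eqref{eq.modified_Almgren} gives \eqref{eq:weak_frequency_formula} at once. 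The right-hand side of \eqref{eq:weak_frequency_formula} is nonnegative, so \(H_\beta'\ge0\) a.e.

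To pass from a.e. derivative to genuine monotonicity, we need \(H_\beta\) absolutely continuous on compact subintervals of \((0,r_\beta)\). This holds because \(M\), \(J\) and \(r^{-n-2}J\) are absolutely continuous there. Indeed, \(I\) is an integral over balls of an \(L^1\) function, and \(J'\) is computed from \(u\in W^{1,2}\) via \eqref{eq:scaled_J_derivative_raw}. Moreover \(J\) is bounded away from zero on compact subintervals. Hence \(H_\beta\) is nondecreasing and bounded below by \(0\), so \(H_\beta(0^+)\) exists and is finite, with \(H_\beta(0^+)\in[0,H_\beta(r)]\).

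Independence of \(\beta\) follows directly from \eqref{eq.Hbetar}. Write \(H_\beta(r)=H_{\rm VW}(r)-\tfrac32+r^\beta\). For \(\beta_1,\beta_2\in(0,1)\) and \(r<\min(r_{\beta_1},r_{\beta_2})\), this gives \(H_{\beta_1}(r)-H_{\beta_2}(r)=r^{\beta_1}-r^{\beta_2}\to0\), so the two limits agree.

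For \eqref{eq:radial_defect_integrability}, we integrate \eqref{eq.modified_Almgren} over \((s,r)\subset(0,r_\beta)\). All terms on the right are nonnegative, and absolute continuity is available on \([s,r]\). This yields
\[
2\int_s^r\frac{1}{tJ(t)}\int_{\partial B_t}\bigl[t\nabla u\cdot\nu-D(t)u\bigr]^2\,\dH{n-1}\,\dx{t}\le H_\beta(r)-H_\beta(s)\le H_\beta(r).
\]
Letting \(s\to0^+\) by monotone convergence gives the finite bound \(\tfrac12H_\beta(r)<\infty\).

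The only delicate point is the regularity step: ensuring absolute continuity of \(H_\beta\) and positivity of \(J\) near \(0\), so that "\(H_\beta'\ge0\) a.e." really implies monotonicity. I would settle it by citing the absolute continuity of \(r\mapsto J(r)\) and \(r\mapsto I(r)\) used for Theorem \ref{thm:monotonicity_formula} in \cite{VW12}.
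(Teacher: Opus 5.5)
Your proposal is correct and follows the paper's proof essentially step by step. You drop the first nonnegative term of \eqref{eq.modified_Almgren}, use \(H_\beta\ge0\) (from Proposition~\ref{prop:modified_Weiss}) and \(V\ge0\) to get monotonicity and the finite limit, compare via \(H_{\beta_1}-H_{\beta_2}=r^{\beta_1}-r^{\beta_2}\), and integrate \eqref{eq.modified_Almgren} from \(s\) to \(r\); your added remarks on absolute continuity of \(H_\beta\) and on \(J>0\) (which also follows from \eqref{eq:L2_boundary_control} and continuity, since \(u\not\equiv0\) in \(B_r\)) only make explicit what the paper leaves implicit.
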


\begin{proof}
    The inequality \eqref{eq:weak_frequency_formula} follows immediately from \eqref{eq.modified_Almgren} by dropping the first nonnegative term. Since \(V(r)\ge0\) by definition and \(\Psi_\beta(r)\ge0\) by Proposition \ref{prop:modified_Weiss}, we have \(H_\beta(r)\ge0\). Therefore \(H_\beta\) is nondecreasing on \((0,r_\beta)\). Since \(H_\beta\ge0\), for every fixed \(r_1\in(0,r_\beta)\) we have \(0\le H_\beta(r)\le H_\beta(r_1)\) for \(0<r<r_1\). Thus the right limit \(H_\beta(0^+)\) exists and is finite. If \(\beta_1,\beta_2\in(0,1)\), then by \eqref{eq.Hbetar}, \(H_{\beta_1}(r)-H_{\beta_2}(r) = r^{\beta_1}-r^{\beta_2}\). Letting \(r\to0^+\), we obtain \(H_{\beta_1}(0^+)=H_{\beta_2}(0^+)\). We shall denote the common value by \(H(0^+)\), which is independent of \(\beta\). It remains to prove \eqref{eq:radial_defect_integrability}. It follows from \eqref{eq.modified_Almgren} that
    \[ 
        H_\beta'(t) \ge \frac{2}{tJ(t)} \int_{\partial B_t} \left[ t\nabla u\cdot\nu-D(t)u \right]^2 \,\dH{n-1} 
    \] 
    for a.e. \(t\in(0,r_1)\). Integrating from \(\varepsilon\) to \(r\), and then letting \(\varepsilon\to0^+\), gives
    \[
        2\int_0^r \frac1{tJ(t)} \int_{\partial B_t} \left[ t\partial_\nu u-D(t)u \right]^2 \,\dH{n-1}\,\dx{t} \le H_\beta(r)-H(0^+) <+\infty.
    \]
    This proves \eqref{eq:radial_defect_integrability}.
\end{proof}

With the help of the existence of the limit \(H(0^+)\), we can prove 

\begin{proposition}\label{prop:dyadic_J_V_estimates}
    Fix \(\beta\in(0,1)\), there exists \(r_\beta>0\) sufficiently small such that the following statements hold. 
    \begin{enumerate}[label=(\roman*)] 
        \item \label{item.integrable} For every \(r\in(0,r_\beta)\), 
        \begin{equation}\label{eq:V_square_integrability} 
            \int_0^r \frac{V^2(t)}{t}\,\dx{t} <+\infty. 
        \end{equation} 
        \item \label{item.Jcomparability} The function 
        \[ 
            r\mapsto e^{\frac2\beta r^\beta}(r^{-n-2}J(r)) 
        \] 
        is nondecreasing on \((0,r_\beta)\). Moreover, there exists a constant \(C_1>1\) such that 
        \begin{equation}\label{eq:J_dyadic_comparability} 
            C_1^{-1}s^{-n-2}J(s) \le r^{-n-2}J(r) \le C_1s^{-n-2}J(s)
        \end{equation} 
        for every \(r\in(0,r_\beta)\) and every \(s\in[r/2,r]\). In particular, 
        \[ 
            J(2r)\le C_12^{n+2}J(r) \qquad\text{ for }0<r<r_\beta/2. 
        \] 
        \item \label{item.Vcomparability} There exists a constant \(C_2>0\) such that 
        \begin{equation}\label{eq:V_dyadic_comparability} 
            \sup_{s\in[r/2,r]}V(s) \le C_2V(r) 
        \end{equation} for every \(r\in(0,r_\beta)\). 
    \end{enumerate} 
\end{proposition}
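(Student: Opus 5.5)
The plan is to read all three items off the weak frequency inequality \eqref{eq:weak_frequency_formula} of Corollary \ref{cor:weak_frequency_monotonicity}, the logarithmic-derivative identity \eqref{eq:J_log_derivative}, and the decomposition \eqref{eq:D_H_V_identity}. The nonnegativity facts \(H_\beta\ge0\) and \(V\ge0\) are used throughout. I shrink \(r_\beta\) so that, in addition to the requirement of Corollary \ref{cor:weak_frequency_monotonicity}, it satisfies \(r_\beta\le1\). I use that \(J(r)>0\) for small \(r\), which is implicit in the definitions of \(D\), \(H_\beta\) and \(V\). I also use that \(r\mapsto r^{-n-2}J(r)\) is absolutely continuous, so monotonicity follows from an a.e. derivative inequality, as in the proof of Proposition \ref{prop:modified_Weiss}.

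For \ref{item.integrable}, drop the first nonnegative term in \eqref{eq:weak_frequency_formula} to get \(H_\beta'(t)\ge \frac2t(t^\beta-V(t))^2\). Integrating over \((\varepsilon,r)\) and letting \(\varepsilon\to0^+\) gives
\[
2\int_0^r \frac{(t^\beta-V(t))^2}{t}\,\dx{t}\le H_\beta(r)-H(0^+)<\infty.
\]
Then \(V^2\le 2(t^\beta-V)^2+2t^{2\beta}\), and \(\int_0^r t^{2\beta-1}\,\dx{t}<\infty\), which yields \eqref{eq:V_square_integrability}. Moreover, the bound \(\int_0^{r_\beta}V^2/t\,\dx{t}\le \Lambda\) is uniform for some finite \(\Lambda\).

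For \ref{item.Jcomparability}, insert \eqref{eq:D_H_V_identity} into \eqref{eq:J_log_derivative}:
\[
\frac{(r^{-n-2}J)'}{r^{-n-2}J}=\frac2r\bigl(H_\beta+V-r^\beta\bigr)\ge -2r^{\beta-1}.
\]
Hence \(\frac{\dx{}}{\dx r}\log\bigl(e^{\frac2\beta r^\beta}r^{-n-2}J(r)\bigr)\ge0\), which is the claimed monotonicity. For \(s\in[r/2,r]\) this gives \(s^{-n-2}J(s)\le e^{\frac2\beta(r^\beta-s^\beta)}r^{-n-2}J(r)\le e^{2/\beta}r^{-n-2}J(r)\).

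The reverse bound is the step needing care, since the log-derivative must also be bounded above on average and \(V\) is not known to be bounded pointwise. I use that \(H_\beta\) is nondecreasing, so \(H_\beta(t)\le H_\beta(r_1)\) for a fixed \(r_1<r_\beta\) (shrinking \(r_\beta\) to \(r_1\)). Then by Cauchy--Schwarz and \ref{item.integrable},
\[
\log\frac{r^{-n-2}J(r)}{s^{-n-2}J(s)}\le 2\int_{s}^{r}\frac{H_\beta+V}{t}\,\dx{t}\le 2H_\beta(r_1)\log2+2\sqrt{\log 2}\,\Lambda^{1/2}.
\]
This gives \eqref{eq:J_dyadic_comparability} with \(C_1:=\max\{e^{2/\beta},\exp(2H_\beta(r_1)\log2+2\sqrt{\log2\,\Lambda})\}\). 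The doubling bound \(J(2r)\le C_12^{n+2}J(r)\) follows by applying this with the pair \((2r,r)\).

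For \ref{item.Vcomparability}, write \(V(s)=sN(s)/J(s)\) with \(N(s):=\int_{B_s}x_n^+(1-\chi_{\{u>0\}})\,\dx{x}\), which is nondecreasing in \(s\). For \(s\in[r/2,r]\) this gives
\[
V(s)\le \frac{s}{r}\,\frac{J(r)}{J(s)}\,V(r)\le \Bigl(\frac rs\Bigr)^{n+2}\frac{r^{-n-2}J(r)}{s^{-n-2}J(s)}\,V(r)\le 2^{n+2}C_1V(r)
\]
by \eqref{eq:J_dyadic_comparability}. So \(C_2:=2^{n+2}C_1\) works.
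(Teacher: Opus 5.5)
Your proposal is correct and follows the paper's proof essentially step for step. Item (i) comes from integrating the weak frequency inequality. Item (ii) uses $(\log(r^{-n-2}J))'=\frac{2}{r}(H_\beta+V-r^\beta)\ge -2r^{\beta-1}$ together with Cauchy--Schwarz on $\int V/t$. Item (iii) follows from the monotonicity of $\int_{B_r}x_n^+(1-\chi_{\{u>0\}})$ and the comparability of $r^{-n-2}J$.

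The differences are cosmetic. You bound $\int_s^r V^2/t$ by the global integral from (i), where the paper uses the dyadic increment $H_\beta(r)-H_\beta(r/2)$. You also end with $C_2=2^{n+2}C_1$ instead of the paper's $2^{n+1}C_1$; both constants are valid.
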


\begin{proof}
    We first prove \ref{item.integrable}. It follows from \eqref{eq:weak_frequency_formula} that \(H_\beta'(r) \ge \frac2r \left( r^\beta-V(r) \right)^2\) for a.e. \(r\in(0,r_\beta)\). Integrating from \(\varepsilon\) to \(r\), and then letting \(\varepsilon\to0^+\), we obtain
    \[ 
        \int_0^r \frac{ \left( t^\beta-V(t) \right)^2 }{t} \,\dx{t} <+\infty, 
    \] 
    because \(H(0^+)\) is finite. Since \(V(t)^2 \le 2\left(V(t)-t^\beta\right)^2 + 2t^{2\beta}\) and \(\beta\in(0,1)\), we get 
    \[ 
        \int_0^r \frac{V^2(t)}{t}\,\dx{t} \le 2\int_0^r \frac{ \left(V(t)-t^\beta\right)^2 }{t} \,\dx{t} + 2\int_0^r t^{2\beta-1}\,\dx{t} <+\infty,    
    \] 
    which proves \eqref{eq:V_square_integrability}.

    We next prove \ref{item.Jcomparability}. Set \(\mathcal{J}(r):=r^{-n-2}J(r)\). It then follows from \eqref{eq:J_log_derivative} and \eqref{eq:D_H_V_identity} that 
    \begin{equation}\label{eq.derivativecJrlog}
        \frac{\dx{}}{\dx r}\log\mathcal J(r) = \frac2r \left( H_\beta(r)+V(r)-r^\beta \right)
    \end{equation}
    for a.e. \(r\in(0,r_\beta)\). Since \(H_\beta(r)\ge0\) and \(V(r)\ge0\), we infer from \eqref{eq.derivativecJrlog} that \((\log\cJ(r))'\ge-2r^{\beta-1}\). This implies that
    \[ 
        \frac{\dx{}}{\dx r} \left[ \log\mathcal J(r)+\frac2\beta r^\beta \right] \ge0. 
    \] 
    Equivalently, \(r\mapsto e^{\frac2\beta r^\beta}\mathcal J(r)\) is nondecreasing. We now prove the two-sided estimate \eqref{eq:J_dyadic_comparability}. Since \(H_\beta(r)\to H(0^+)\), after decreasing \(r_\beta\), we may assume \(H_\beta(r)\le H(0^+)+1=:M_\beta\) for \(0<r<r_\beta\). Let \(s\in[r/2,r]\). It follows from \((\log\cJ(t))'\ge -2t^{\beta-1}\) that
    \[
        \log\mathcal J(r)-\log\mathcal J(s) 
        \ge -2\int_s^r t^{\beta-1}\,\dx{t} 
        = -\frac2\beta\left(r^\beta-s^\beta\right) 
        \ge -\frac2\beta r_\beta^\beta .
    \] 
    Hence \(\cJ(r)\ge e^{-\frac2\beta r_\beta^\beta}\cJ(s)\). For the reverse inequality, we first control \(V\) on dyadic intervals.  Using \eqref{eq:weak_frequency_formula} once again, 
    \[
        2\int_{r/2}^r \frac{ \left(t^\beta-V(t)\right)^2 }{t} \,\dx{t} \le H_\beta(r)-H_\beta(r/2) \le M_\beta.
    \]
    Therefore
    \[
        \int_{r/2}^r \frac{V^2(t)}{t}\,\dx{t} 
        \le 2\int_{r/2}^r \frac{ \left(t^\beta-V(t)\right)^2 }{t} \,\dx{t} + 2\int_{r/2}^r t^{2\beta-1}\,\dx{t}
        \le M_\beta + \frac{1}{\beta}r_\beta^{2\beta} =:C_\beta .
    \]
    By Cauchy's inequality, 
    \[
        \int_{r/2}^r\frac{V(t)}{t}\,\dx{t} 
        \le \left( \int_{r/2}^r\frac{V^2(t)}{t}\,\dx{t} \right)^{1/2} \left( \int_{r/2}^r\frac{\dx{t}}{t} \right)^{1/2}
        \le \sqrt{C_\beta\log2}.
    \]
    Now integrate \eqref{eq.derivativecJrlog} from \(s\) to \(r\), and discard the negative term \(-2t^{\beta-1}\). Since \(s\in[r/2,r]\), we obtain 
    \[
        \log\mathcal J(r)-\log\mathcal J(s) \le 2\int_s^r\frac{H_\beta(t)}{t}\,\dx{t} + 2\int_s^r\frac{V(t)}{t}\,\dx{t} 
        \le 2M_\beta\log2 + 2\sqrt{C_\beta\log2}. 
    \]
   Combining the two estimates proves \eqref{eq:J_dyadic_comparability} for a constant \(C_1>1\). Taking \(r\) replaced by \(2r\) and \(s=r\) gives \(\cJ(2r)\le C_1\cJ(r)\) for \(0<r<r_\beta/2\).

   It remains to prove \ref{item.Vcomparability}. Let \( G(r) := \int_{B_r} x_n^+\bigl(1-\chi_{\{u>0\}}\bigr)\,\dx{x}\ge 0\). Then \(G\) is nondecreasing in \(r\) and \(V(r)=r^{-n-1}G(r)/\cJ(r)\) where \(\cJ(r):=r^{-n-2}J(r)\). Since \(G(s)\le G(r)\), we get
   \[
        V(s) = \frac{s^{-n-1} G(s)}{\cJ(s)} 
        \le 
        \frac{s^{-n-1} G(r)}{\cJ(s)} 
        = \left(\frac rs\right)^{n+1} \frac{\cJ(r)}{\cJ(s)} V(r). 
   \]
   Since \(s\in[r/2,r]\), we have \((r/s)^{n+1}\le2^{n+1}\). It follows from \eqref{eq:J_dyadic_comparability} that \(\frac{\cJ(r)}{\cJ(s)}\le C_1\). Therefore \(V(s) \le 2^{n+1}C_1V(r)\). Taking the supremum over \(s\in[r/2,r]\) proves \eqref{eq:V_dyadic_comparability}.
\end{proof}

We immediately obtain 

\begin{corollary}\label{cor:V_vanishes} 
    \( \lim_{r\to0^+}V(r)=0 \).
\end{corollary}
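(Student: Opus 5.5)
We argue by contradiction, combining the square integrability \eqref{eq:V_square_integrability} from Proposition \ref{prop:dyadic_J_V_estimates}\ref{item.integrable} with the dyadic comparability \eqref{eq:V_dyadic_comparability} from part \ref{item.Vcomparability}. Throughout, fix \(\beta\in(0,1)\) and work on \((0,r_\beta)\).

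\textbf{Step 1: the lower bound on dyadic intervals.} Suppose the claim fails. Then there exist \(\varepsilon>0\) and a sequence \(r_k\to0^+\) with \(V(r_k)\ge\varepsilon\). Passing to a subsequence, we may assume \(r_{k+1}<r_k/2\), so that the dyadic intervals \([r_k/2,r_k]\) are pairwise disjoint.

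The comparability estimate \eqref{eq:V_dyadic_comparability} only bounds \(V\) on \([r/2,r]\) \emph{from above} by \(C_2V(r)\). We need the reverse direction, so we return to its proof. There \(V(s)=s^{-n-1}G(s)/\mathcal J(s)\) with \(G\) nondecreasing. Take \(t\in[r_k,2r_k]\); this needs \(2r_k<r_\beta\), which holds for \(k\) large. Then \(G(t)\ge G(r_k)\), while \(t^{-n-1}\ge 2^{-n-1}r_k^{-n-1}\) and, by \eqref{eq:J_dyadic_comparability}, \(\mathcal J(t)\le C_1\mathcal J(r_k)\). Combining these gives
\[
V(t)\ge 2^{-n-1}C_1^{-1}V(r_k)\ge c\,\varepsilon\qquad\text{for all }t\in[r_k,2r_k],
\]
with \(c:=2^{-n-1}C_1^{-1}\). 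Thus the lower bound propagates to the right-adjacent dyadic interval. After a further subsequence with \(2r_{k+1}<r_k\), the intervals \([r_k,2r_k]\) are also disjoint.

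\textbf{Step 2: contradiction with integrability.} On each interval we get
\[
\int_{r_k}^{2r_k}\frac{V^2(t)}{t}\,\dx{t}\ge c^2\varepsilon^2\log 2 .
\]
Summing over the infinitely many disjoint intervals inside \((0,r)\) makes \(\int_0^r V^2(t)/t\,\dx{t}\) infinite. This contradicts \eqref{eq:V_square_integrability}, so \(V(r)\to0\).

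\textbf{The main obstacle.} The only delicate point is the direction of the comparability. Estimate \eqref{eq:V_dyadic_comparability} as stated controls \(\sup V\) over the left interval by \(V(r)\), which by itself does not yield a lower bound on a set of positive logarithmic measure. It also allows \(V(r_k)\) to be large while \(V\) is small elsewhere on \([r_k/2,r_k]\).

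Monotonicity of \(G\) together with \eqref{eq:J_dyadic_comparability} resolves this, provided we work to the right of \(r_k\) as in Step 1. Equivalently, one can apply \eqref{eq:V_dyadic_comparability} with \(r=t\in[r_k,2r_k]\) and \(s=r_k\), which gives \(V(r_k)\le C_2V(t)\) directly.
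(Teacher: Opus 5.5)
Your argument is correct and is essentially the paper's: both combine the square integrability of $V^2/t$ near $0$ with the comparability \eqref{eq:V_dyadic_comparability}, applied at a radius $t\in[r,2r]$ to get $V(r)\le C_2V(t)$. The only difference is that the paper argues directly rather than by contradiction: averaging $\int_r^{2r}V^2(t)/t\,\dx{t}\to0$ produces a point $s_r\in[r,2r]$ with small $V(s_r)$, which even gives a rate. (Also, $r_{k+1}<r_k/2$ already makes the intervals $[r_k,2r_k]$ disjoint, so your second subsequence is unnecessary.)
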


\begin{proof} 
    By Corollary \ref{cor:weak_frequency_monotonicity}, \(r\mapsto H_\beta(r)\) is nondecreasing and has a finite right limit at \(0\). Thus \(d(r):=H_\beta(2r)-H_\beta(r)\ge 0\) and \(d(0^+)=0\). It follows from \eqref{eq:weak_frequency_formula} that \(2\int_r^{2r}\frac{(t^\beta-V(t))^2}{t}\le d(r)\). Thus, we have 
    \[
        \int_r^{2r}\frac{V^2(t)}{t}\,\dx{t} 
        \le d(r) + 2\int_r^{2r}t^{2\beta-1}\,\dx{t} 
        = d(r) + \frac{2^{2\beta}-1}{\beta}r^{2\beta} =:\varepsilon_\beta(r).
    \]
    Obviously \(\varepsilon_\beta(r)\to0\) as \(r\to 0^+\). Hence there exists \(s_r\in[r,2r]\) such that \(V^2(s_r) \le \frac{\varepsilon_\beta(r)}{\log2}\). Indeed, otherwise
    \[
        \int_r^{2r}\frac{V^2(t)}{t}\,\dx{t} > \frac{\varepsilon_\beta(r)}{\log2} \int_r^{2r}\frac{\dx{t}}t = \varepsilon_\beta(r),
    \]
    which is impossible. Since \(s_r\in[r,2r]\), we have \(r\in[s_r/2,s_r]\). Applying \eqref{eq:V_dyadic_comparability} with radius \(s_r\), we obtain 
    \[ 
        V(r) \le \sup_{t\in[s_r/2,s_r]}V(t) \le C V(s_r) \le C \left( \frac{\varepsilon_\beta(r)}{\log2} \right)^{1/2}. 
    \] 
    Letting \(r\to0^+\), we conclude that \(V(r)\to 0\). 
\end{proof}

We then obtain the existence of \(D(0^+)\) under the Assumption \ref{assumption:nonlinearity} on the nonlinearity \(f\).

\begin{proposition}[Existence of the frequency limit] \label{prop:frequency_limit} 
    Let \(u\) be a variational solution of \eqref{eq.wwp}, assume that \(f\) satisfies Assumption \ref{assumption:nonlinearity}, and let \(x^0\in\Sigma^u\). Then the finite limit 
    \[ 
        D(0^+) := \lim_{r\to0^+}D(r) 
    \] 
    exists. Moreover, \(D(0^+)=H(0^+)+\frac32\) and in particular \(D(0^+)\ge 3/2\).
\end{proposition}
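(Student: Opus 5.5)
The plan is to read the limit off the algebraic identity \eqref{eq:D_H_V_identity}, which holds for a.e. \(r\in(0,r_\beta)\):
\[
    D(r)=\frac32+H_\beta(r)+V(r)-r^\beta .
\]
Each of the three varying terms on the right has a known limit as \(r\to0^+\), so the argument is short.

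The steps, in order, are as follows.

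First, fix any \(\beta\in(0,1)\) and take \(r_\beta\) as in Proposition \ref{prop:modified_Weiss}, Corollary \ref{cor:weak_frequency_monotonicity} and Proposition \ref{prop:dyadic_J_V_estimates}, shrinking it so that all three apply simultaneously.

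Second, invoke Corollary \ref{cor:weak_frequency_monotonicity}. It says \(H_\beta(r)\to H(0^+)\), a finite value independent of \(\beta\), since \(H_\beta\) is nonnegative and nondecreasing.

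Third, invoke Corollary \ref{cor:V_vanishes}, which gives \(V(r)\to0\). Trivially \(r^\beta\to0\) as well.

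Fourth, conclude that \(D(r)\to H(0^+)+\tfrac32\). Finiteness of the limit is inherited from \(H(0^+)\), and \(D(0^+)\ge 3/2\) follows from \(H(0^+)\ge0\), which in turn holds because \(H_\beta\ge0\) by Proposition \ref{prop:modified_Weiss} and \eqref{eq.Hbetar}.

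The only point needing care is that \eqref{eq:D_H_V_identity} was derived for a.e. \(r\), whereas the statement concerns a genuine limit of \(D(r)\). I would address this by noting that every quantity in the identity is in fact defined for every \(r\) at which \(J(r)>0\). Specifically, \(J\) is continuous because \(u\in C^0\). The numerator of \(D\) equals \(\int_{B_r}(|\nabla u|^2-uf(u))\) by the energy identity \eqref{eq.energy_identity}, and this is continuous in \(r\). Finally, \(M\), \(V\) and \(H_\beta\) are continuous in \(r\). Hence the identity extends to all small \(r\), and the limit makes sense pointwise.

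If \(J\) could vanish at small radii, \(D\) would be undefined there. This is excluded, though: by Proposition \ref{prop:dyadic_J_V_estimates}\ref{item.Jcomparability}, \(e^{\frac2\beta r^\beta}r^{-n-2}J(r)\) is monotone and positive wherever it is defined, and since \(H_\beta\) is only meaningful when \(J(r)>0\), positivity of \(J\) is implicit throughout. I expect this positivity and continuity bookkeeping to be the only (minor) obstacle. The substantive work has already been done in Corollaries \ref{cor:weak_frequency_monotonicity} and \ref{cor:V_vanishes}.
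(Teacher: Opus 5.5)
Your proposal is correct and follows the paper's proof exactly: rewrite \eqref{eq:D_H_V_identity} as $D(r)=\frac32+H_\beta(r)+V(r)-r^\beta$, let $r\to0^+$ using Corollaries \ref{cor:weak_frequency_monotonicity} and \ref{cor:V_vanishes}, and deduce $D(0^+)\ge\frac32$ from $H_\beta\ge0$. One small correction to your side bookkeeping: Proposition \ref{prop:dyadic_J_V_estimates}\ref{item.Jcomparability} cannot be used to show $J>0$, because that monotonicity was derived assuming $J>0$ and it only bounds $J$ from above at smaller radii; positivity instead follows directly from \eqref{eq:L2_boundary_control}, since $u\not\equiv0$ in $B_r$ whenever $x^0\in\partial\{u>0\}$.
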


\begin{proof}
    It follows from \eqref{eq:D_H_V_identity} that \(D(r)=H_\beta(r)+V(r)+\frac 32 -r^\beta\). Passing to the limit as \(r\to0^+\), we deduce from Corollary \ref{cor:weak_frequency_monotonicity} that \(H_\beta(r)\to H(0^+)\). Corollary \ref{cor:V_vanishes} gives \(V(r)\to 0\) and clearly \(r^\beta\to0\) since \(\beta>0\). Therefore 
    \[ 
        \lim_{r\to0^+}D(r) = H(0^+)+\frac32. 
    \] 
    Since \(H_\beta(r)\ge0\), we have \(H(0^+)\ge0\). Hence \(D(0^+)\ge\frac32\).
\end{proof}

We can also prove the first part of Theorem \ref{thm:main}, that \(\{\phi_r\}_{r>0}\) defined in \eqref{eq.phir} is bounded in \(W^{1,2}(B_1)\). 

\begin{corollary}\label{Corollary:frequency_bounded_nd}
    Let \(x^0\in\Sigma^u\), and let \(r_k\to 0^+\). Define 
    \[
        A_k
        :=
        \left(r_k^{1-n}J(r_k)\right)^{1/2},
        \qquad
        \phi_k(x)
        :=
        \frac{u(x^0+r_kx)}{A_k}.
    \]
    Then, for \(k\) sufficiently large, there is a constant \(C_n>0\) depending only on \(n\) such that
    \[
        \|\phi_k\|_{W^{1,2}(B_1)}^2
        \le
        C_n\left(1+D(0^+)\right).
    \]
    In particular, since \(D(0^+)\ge 3/2\),
    \[
        \|\phi_k\|_{W^{1,2}(B_1)}^2
        \le
        C_n D(0^+).
    \]
    Consequently, \(\{\phi_k\}\) is bounded in \(W^{1,2}(B_1)\).
\end{corollary}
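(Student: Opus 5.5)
By scaling, every quantity in the bound can be written through $J$, $D$ and the energy identity. After a change of variables, since $A_k^2 = r_k^{1-n}J(r_k)$, we have
\[
    \int_{\partial B_1}\phi_k^2\,\dH{n-1} = \frac{r_k^{1-n}J(r_k)}{A_k^2}=1,
\]
and
\[
    \int_{B_1}|\nabla\phi_k|^2\,\dx{x} = \frac{r_k^{2-n}}{A_k^2}\int_{B_{r_k}}|\nabla u|^2\,\dx{x} = \frac{r_k}{J(r_k)}\int_{B_{r_k}}|\nabla u|^2\,\dx{x}.
\]
For the $L^2$ part I would use \eqref{eq:L2_boundary_control}, which gives
\[
    \int_{B_1}\phi_k^2\,\dx{x}=\frac{r_k^{-n}}{A_k^2}\int_{B_{r_k}}u^2\,\dx{x} = \frac{1}{r_kJ(r_k)}\int_{B_{r_k}}u^2\,\dx{x}\le \frac 2n
\]
(for a.e. $r_k$; I reduce to such radii first, or pass through continuity of $J$).

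For the gradient I would use the energy identity \eqref{eq.energy_identity}:
\[
    \int_{B_r}|\nabla u|^2\,\dx{x} = \frac{D(r)J(r)}{r}+\int_{B_r}uf(u)\,\dx{x}.
\]
Since $uf(u)\le uf^+(u)\le \mathsf P_0u^2$, this yields
\[
    \int_{B_1}|\nabla\phi_k|^2\,\dx{x}\le D(r_k)+\frac{r_k\mathsf P_0}{J(r_k)}\int_{B_{r_k}}u^2\,\dx{x}\le D(r_k)+\frac{2}{n}\mathsf P_0 r_k^2.
\]
This is the key point: only the upper bound on $f^+$ enters, so the logarithmic growth of $f^-$ is harmless here. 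By Proposition \ref{prop:frequency_limit}, $D(r_k)\to D(0^+)$, so for large $k$ we have $D(r_k)\le D(0^+)+1$ and $\mathsf P_0r_k^2\le 1$. Hence
\[
    \|\phi_k\|_{W^{1,2}(B_1)}^2\le D(0^+)+1+\frac 4n \le C_n(1+D(0^+)).
\]
Because $D(0^+)\ge 3/2$, we get $1\le \tfrac23 D(0^+)$, which absorbs the constant and gives the bound $C_nD(0^+)$.

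The only delicate step is the a.e.\ issue. Both \eqref{eq:L2_boundary_control} and the energy identity hold for a.e.\ $r$, while $r_k$ is arbitrary. I would handle this by noting that all terms involved are continuous in $r$ under the growth assumption \eqref{eq.growth_assumption}: $u$ is continuous and $\nabla u$ is locally bounded, so $J$ and $r\mapsto\int_{B_r}u\,\nabla u\cdot\nu$ (written as a volume integral via the divergence theorem) are continuous, and $J(r)>0$. The inequalities then extend to every $r\in(0,\delta_0)$.
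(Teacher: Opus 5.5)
Your proof is correct. Its skeleton matches the paper's: split $|\nabla u|^2=(|\nabla u|^2-uf(u))+uf(u)$, identify the first piece after scaling with the frequency, and control the second using only $f^+(z)\le\mathsf P_0 z$, so the logarithmic growth of $f^-$ never enters.

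The genuine difference is how you control $\int_{B_{r_k}}u^2$.
\begin{itemize}
\item The paper bounds $\frac{r}{J(r)}\int_{B_r}uf(u)\le r^2\mathsf P_0\int_{B_1}\phi^2$. It then uses the trace (Friedrichs) inequality $\int_{B_1}\phi^2\le C_n\bigl(\int_{B_1}|\nabla\phi|^2+1\bigr)$ and absorbs the gradient term once $C_nr^2\mathsf P_0\le 1/2$.
\item You reuse \eqref{eq:L2_boundary_control} from Lemma~\ref{lem:boundary_control_Weiss_defect}. This gives directly $\int_{B_1}\phi_k^2\le 2/n$, uniformly in $k$ and independent of $D(0^+)$. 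No absorption step is needed, and you obtain the explicit bound $\|\phi_k\|^2_{W^{1,2}(B_1)}\le D(0^+)+1+4/n$.
\end{itemize}

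The price of your route is the a.e.\ issue you flagged, and your continuity argument handles it. Both sides of \eqref{eq:L2_boundary_control} are continuous in $r$, and $\frac{r}{J(r)}\int_{B_r}(|\nabla u|^2-uf(u))$ is a continuous representative of $D$, with $J>0$.

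The paper avoids any extension step. It writes the gradient quantity as $H_\beta(r)+V(r)-r^\beta+\frac32$ via \eqref{eq:M_minus_M0_identity}, an identity that follows from the definitions at every radius. It also uses a functional inequality valid for every $W^{1,2}$ function.

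In short, your version gives sharper, explicit constants, while the paper's needs no extension step because it only uses a definitional identity and a generic inequality.
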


\begin{proof}
    For simplicity, write \(r=r_k\), \(A=A_k\), and \(\phi(x):=A^{-1}u(x^0+rx)\). By the definition of \(A\), we have \(\int_{\partial B_1}\phi^2=A^{-2}r^{1-n}J(r)=1\). It follows from the identity \eqref{eq:M_minus_M0_identity}, and from the definitions of \(H_\beta\) in \eqref{eq.Hbetar} and \(V(r)\) in \eqref{eq.Vr} that
    \[
        \frac r{J(r)} \int_{B_r(x^0)}\left( |\nabla u|^2-uf(u) \right)\,\dx{x}
        =
        H_\beta(r)+V(r)-r^\beta+\frac32.
    \]
    Since \(H_\beta(r)+V(r)-r^\beta+\frac32\to D(0^+)\) as \(r\to0^+\), for all sufficiently small \(r\) we have
    \[
        \frac r{J(r)}\, \int_{B_r(x^0)}\left( |\nabla u|^2-uf(u) \right)\,\dx{x}
        \le
        D(0^+)+1.
    \]
    A direct computation gives that 
    \[
        \begin{aligned}
            \int_{B_1}|\nabla\phi|^2\,\dx{x}
            &=
            \frac{r^{2-n}}{A^2}
            \int_{B_r(x^0)}|\nabla u|^2\,\dx{x}\\
            &=
            \frac{r}{J(r)}
            \int_{B_r(x^0)}|\nabla u|^2\,\dx{x}\\
            &=
            \frac{r}{J(r)}\int_{B_r(x^0)}\left( |\nabla u|^2-uf(u) \right)\,\dx{x}
            +
            \frac{r}{J(r)}
            \int_{B_r(x^0)}u f(u)\,\dx{x} .
        \end{aligned}
    \]
    Since \(u\ge 0\), we have \(u f(u)\le u f^+(u)\). Moreover, it follows from Assumption \ref{assumption:nonlinearity} that \(f^+(u(x))\le \mathsf P_0\, u(x)\) in \(B_r(x^0)\). Hence,
    \[
        \frac{r}{J(r)}
        \int_{B_r(x^0)}u f(u)\,\dx{x}
        \le
        \frac{r\mathsf P_0}{J(r)}\int_{B_r(x^0)}u^2\,\dx{x}
        = r^2\mathsf P_0\,\int_{B_1}\phi^2\,\dx{x}.
    \]
    Using the trace inequality on \(B_1\), we have 
    \[
        \int_{B_1}\phi^2\,\dx{x}
        \le
        C_n\left(
        \int_{B_1}|\nabla\phi|^2\,\dx{x}
        +
        \int_{\partial B_1}\phi^2\,\dH{n-1}
        \right)
        =
        C_n\left(
        \int_{B_1}|\nabla\phi|^2\,\dx{x}
        +
        1
        \right).
    \]
    Since \(\mathsf P_0<\infty\), \(r^2\mathsf P_0\to0\) as \(r\to0^+\). We may shrink \(r\) if necessary so that \(C_n r^2\mathsf P_0\le 1/2\). Combining the above estimates gives
    \[
        \int_{B_1}|\nabla\phi|^2\,\dx{x}
        \le
        C_n\left( 
            1+D(0^+)
        \right),
    \]
    and
    \[
        \int_{B_1}\phi^2\,\dx{x}
        \le
        C_n\left(
        \int_{B_1}|\nabla\phi|^2\,\dx{x}
        +
        1
        \right)
        \le
        C_n\left( 
            1+D(0^+)
        \right).
    \]
    Thus, \(\|\phi\|_{W^{1,2}(B_1)}^2\le C_n(1+D(0^+))\). Since Proposition \ref{prop:frequency_limit} gives  \(D(0^+)\ge 3/2\), the constant \(1\) can be absorbed into \(D(0^+)\). Thus, \(\|\phi\|_{W^{1,2}(B_1)}^2\le C_n D(0^+)\). This concludes the proof.
\end{proof}

\section{Strong convergence in two dimensions}\label{sec:strong_convergence}

In this section we restrict to the two-dimensional case. The argument is based on the concentration-compactness theorem of Evans--M\"{u}ller \cite[Theorem 1.1 and 3.1]{EM94}. Compared with the proof in \cite{VW12}, the only additional point is the treatment of the nonlinear term under the frequency normalization. This is where the structural assumptions on \(f\) are used.

\begin{theorem}[Strong convergence of frequency-normalized blow-ups]\label{thm:strong-convergence}
    Assume \(n=2\), let \(u\) be a variational solution of \eqref{eq.wwp}, let \(x^0\in\Sigma^u\), and assume that \(f=f(z)\) satisfies Assumption \ref{assumption:nonlinearity}. Let \(r_k\to0^+\), and define
    \[ 
        A_k := \sqrt{r_k^{-1}J(r_k)}, \qquad \phi_k(x) := \frac{u(x^0+r_kx)}{A_k}. 
    \]  
    Then, after passing to a subsequence, \(\phi_k\to\phi_0\) strongly in \(W^{1,2}_{\mathrm{loc}}(B_1\setminus\{0\})\). Moreover, \(\phi_0\Delta\phi_0=0\) as a Radon measure in \(B_1\setminus\{0\}\).
\end{theorem}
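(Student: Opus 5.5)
We follow the Evans--M\"uller concentration-compactness route of \cite{VW12}. The new work is showing that the nonlinear term is negligible under the frequency normalization.

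\emph{Step 1 (equation for $\phi_k$).} By Corollary \ref{Corollary:frequency_bounded_nd}, $\{\phi_k\}$ is bounded in $W^{1,2}(B_1)$. We pass to a subsequence with $\phi_k\rightharpoonup\phi_0$ weakly in $W^{1,2}(B_1)$ and strongly in $L^2$. In $\{\phi_k>0\}$ we have
\[
\Delta\phi_k=-\frac{r_k^2}{A_k}f(A_k\phi_k).
\]
The key point is to bound the right-hand side in $L^1_{\rm loc}$, and in fact to show $\phi_k\,r_k^2A_k^{-1}f(A_k\phi_k)\to0$ in $L^1$. By Assumption \ref{assumption:nonlinearity},
\[
\frac{r_k^2}{A_k}|f(A_k\phi_k)|\le r_k^2\bigl[(\mathsf P_0+\mathsf N_0)\phi_k+\mathsf N_0\phi_k\,|\log(A_k\phi_k)|\bigr].
\]
The logarithm splits as $|\log A_k|+|\log\phi_k|$. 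Since $u\le C|x|^{3/2}$ we have $A_k\le Cr_k^{3/2}$, and Proposition \ref{prop:dyadic_J_V_estimates}\ref{item.Jcomparability} (doubling) gives a polynomial lower bound $A_k\ge c\,r_k^{N}$. Hence $|\log A_k|\le C|\log r_k|$, and then $r_k^2|\log r_k|\to0$. The remaining factor $\phi_k|\log\phi_k|$ is bounded in $L^p$ for every $p<\infty$, by the Sobolev embedding in two dimensions. Altogether $\|\Delta\phi_k\|_{L^1(B_{1}\cap\{\phi_k>0\})}\to0$, up to the free-boundary part.

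\emph{Step 2 (measure structure and concentration).} We use the standard fact that $\Delta u$ is a nonnegative measure on $\partial\{u>0\}$ plus the absolutely continuous part $-f(u)\chi_{\{u>0\}}$. This gives $\Delta\phi_k=\mu_k-g_k$, where $\mu_k\ge0$ is supported on $\partial\{\phi_k>0\}$ and $\|g_k\|_{L^1}\to0$ by Step 1. To bound $\mu_k(B_\rho)$ locally, we test with a cutoff and use the $W^{1,2}$ bound, so that $\Delta\phi_k$ is bounded in measure. Next we compute the domain variation of $\phi_k$ from Definition \ref{def:variational_solution}, rescaled. The rescaled identity contains the terms $(x_2\chi)r_k^{3}/A_k^2$ and $F$-terms. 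The $F$-term is $r_k^2A_k^{-2}F(A_k\phi_k)$, which is controlled like Step 1 by $r_k^2\phi_k^2(1+|\log r_k|+|\log\phi_k|)\to0$. For the $\chi$-terms we need $r_k^{3}/A_k^2\to0$ in the $L^1$ sense, together with the full-density information. We would use $V(r)\to0$ (Corollary \ref{cor:V_vanishes}) and $\Psi_\beta\ge0$ to show that the combination $r_k^3A_k^{-2}\int x_2^+\chi_{\{\phi_k>0\}}\operatorname{div}\eta$ converges to the same quantity as with $\chi\equiv1$. In two dimensions this contributes only a term that is harmless in the limit; as in \cite{VW12}, it amounts to a constant-coefficient stress depending on $x_2$, which can be absorbed. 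The upshot is that $\phi_k$ are approximate stationary solutions of the Dirichlet energy away from the origin, with $\Delta\phi_k$ bounded measures.

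\emph{Step 3 (Evans--M\"uller).} Apply \cite[Theorem 1.1 and 3.1]{EM94}. In dimension two, $\nabla\phi_k$ is bounded in $L^2$, the measures $\Delta\phi_k$ have a fixed sign up to an $L^1$-small error, and the distributions $\operatorname{div}$ of the stress tensor $\nabla\phi_k\otimes\nabla\phi_k-\tfrac12|\nabla\phi_k|^2\mathrm{Id}$ tend to zero in $W^{-1,1}$ away from $0$. Under these conditions the theorem yields no concentration, so $\nabla\phi_k\to\nabla\phi_0$ in $L^2_{\rm loc}(B_1\setminus\{0\})$. Finally, $\phi_k\Delta\phi_k=-\phi_k g_k\to0$, because $\phi_k\mu_k=0$ (as $\mu_k$ is supported where $\phi_k=0$). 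Combined with the strong convergence, this gives $\phi_0\Delta\phi_0=0$ as a Radon measure: pass to the limit in $\int\phi_k\Delta\phi_k\eta=-\int(|\nabla\phi_k|^2\eta+\phi_k\nabla\phi_k\cdot\nabla\eta)$.

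\emph{Main obstacle.} The hard part is Step 1/2: the logarithmic part of $f^-$ and the need for a lower bound on $A_k$. We expect the doubling estimate \eqref{eq:J_dyadic_comparability} to give $J(r)\ge c\,r^{n+2+C}$, and this is what makes $r_k^2|\log A_k|\to0$. The gravity term in the domain variation follows \cite{VW12} unchanged once $V\to0$ is known.
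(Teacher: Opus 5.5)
\textbf{The gap is in Step 3.} You attribute to Evans--M\"uller a conclusion it does not give. For a $W^{1,2}$-bounded planar sequence whose Laplacians are nonnegative (after subtracting a $W^{1,2}$-small correction), \cite[Theorem 3.1]{EM94} yields only distributional convergence of $\partial_1\phi_k\partial_2\phi_k$ and $(\partial_1\phi_k)^2-(\partial_2\phi_k)^2$, i.e.\ of the traceless part of $\nabla\phi_k\otimes\nabla\phi_k$. The defect measure of $\nabla\phi_k\otimes\nabla\phi_k$ can therefore still be $\tfrac12\lambda\,\mathrm{Id}$, where $\lambda\ge0$ is the concentration defect of $|\nabla\phi_k|^2$, and nothing in your Steps 1--2 rules this out. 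For example, take harmonic functions in a Cioranescu--Murat perforated domain and extend them by $0$ into critically small holes. They are $W^{1,2}$-bounded, satisfy $\Delta\phi_k\ge0$ and $\phi_k\Delta\phi_k=0$, and still lose energy in the limit. Your last step cannot rescue this. Since $\phi_k\mu_k=0$ and $\phi_kg_k\to0$, one gets $\lim_k\int\eta|\nabla\phi_k|^2=\int\eta|\nabla\phi_0|^2+\langle\Delta\phi_0,\eta\phi_0\rangle$. So $\phi_0\Delta\phi_0=0$ is \emph{equivalent} to the absence of concentration; it is exactly what has to be proved.

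\textbf{The missing idea} is one additional strongly convergent quadratic direction, which the paper takes from the modified frequency formula. Rescale the radial-defect bound \eqref{eq:radial_defect_integrability} to $t\in[\rho r_k,\sigma r_k]$. Combined with the comparability of $r^{-4}J(r)$ across scales and with $D(r)\to D_0$, this gives $\nabla\phi_k\cdot x-D_0\phi_k\to0$ in $L^2(B_\sigma\setminus B_\rho)$. Hence $\nabla\phi_k\cdot x\to\nabla\phi_0\cdot x$ strongly, which forces $x^{T}(\tfrac12\lambda\,\mathrm{Id})x=\tfrac12\lambda|x|^2=0$. Concretely, expanding $|\nabla\phi_k\cdot x|^2$ and using the two Evans--M\"uller limits recovers the limit of $|\nabla\phi_k|^2$ on annuli. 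This is also why the statement excludes the origin, which your argument never explains.

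\textbf{Smaller points.}
\begin{itemize}
\item The rescaled domain variation in Step 2 is not needed, and it does not behave as you describe. The gravity terms carry the factor $r_k^3/A_k^2=r_k^4/J(r_k)$, which is bounded below because $J(r)\le Cr^4$, and in fact blows up at a degenerate point. So these terms do not vanish, and $V(r_k)\to0$ controls only the weighted piece $x_2^+(1-\chi)\operatorname{div}\eta$, not $(1-\chi)\eta_2$ near $\{x_2=0\}$.
\item $L^1$-smallness of $g_k$ would not suffice to restore an exact sign with a $W^{1,2}$-small correction. However, your Step 1 bound actually gives $g_k\to0$ in every $L^p$, so solving $\Delta p_k=g_k$ in $B_\tau$ works; the paper uses only $f^+\le\mathsf P_0z$ at this point.
\item Your doubling-based lower bound on $A_k$ is a valid substitute for the paper's $(\log A)'=D/r$.
\end{itemize}
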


\begin{proof}
    It follows from Corollary \ref{Corollary:frequency_bounded_nd} that the sequence \(\{\phi_k\}\) is bounded in \(W^{1,2}(B_1)\). Then \(\phi_k\rightharpoonup\phi_0\) weakly in \(W^{1,2}(B_1)\) and \(\phi_k\to\phi_0\) strongly in \(L^2(B_1)\). Let us denote \(D_0:=D(0^+)\). It follows from the weak formulation of \eqref{eq.wwp} that \(\Delta u + f(u)\chi_{\{u>0\}}=\mu\), where \(\mu\) is a nonnegative Radon measure supported on \(\partial\{u>0\}\). Therefore
    \[
        \Delta\phi_k = -\frac{r_k^2}{A_k} f(A_k\phi_k)\chi_{\{\phi_k>0\}} + \mu_k,
    \]
    where \(\mu_k\ge0\) is the rescaled free-boundary measure. Fix \(0<R<1\). Since \(A_k\phi_k(x)=u(x^0+r_kx)\), we infer from the growth condition \eqref{eq.growth_assumption} that \(A_k\phi_k(x)\le C(r_kR)^{3/2}\) for \(x\in B_R\) and \(k\) sufficiently large. Hence the structural assumption on the positive part implies
    \[
        f^+(A_k\phi_k)\le \mathsf P_0\, A_k\phi_k.
    \]
    Consequently, 
    \[
        \Delta\phi_k \ge -r_k^2\mathsf P_0\,\phi_k+\mu_k \qquad\text{ in }B_R,
    \]
    where \(\mathsf P_0\) is the nonnegative constant from \eqref{eq.P0N0} in Assumption \ref{assumption:nonlinearity}. Set \(g_k:=-r_k^2\mathsf P_0\,\phi_k\). Since \(\{\phi_k\}\) is bounded in \(L^2(B_R)\), we obtain that \(g_k\to 0\) strongly in \(L^2(B_R)\). Thus \(\nu_k:=\Delta\phi_k-g_k\) is a nonnegative Radon measure in \(B_R\). Let \(0<\sigma<\tau<1\), and let \(p_k\) be the solution of the Poisson equation 
    \[
        \begin{cases}
            \Delta p_k=g_k &\text{ in }B_\tau,\\[3pt]
            p_k=0 &\text{ on }\partial B_\tau.
        \end{cases}
    \]
    It follows from the \(L^2\) Calder\'{o}n--Zygmund estimate that \(p_k\to0\) strongly in \(W^{1,2}(B_\tau)\). Hence \(w_k:=\phi_k-p_k\) satisfies \(\Delta w_k=\nu_k\ge0\) in \(B_\tau\), and \(w_k-\phi_k\to0\) strongly in \(W^{1,2}(B_\tau)\). After a standard mollification inside \(B_\sigma\), we may apply the Evans--M\"{u}ller concentration-compactness theorem \cite[Theorem 1.1 and Theorem 3.1]{EM94} to the subharmonic sequence \(w_k\). It follows that \(\partial_1\phi_k\,\partial_2\phi_k \to \partial_1\phi_0\,\partial_2\phi_0\) in the sense of distributions in \(B_\sigma\), and \((\partial_1\phi_k)^2-(\partial_2\phi_k)^2 \to (\partial_1\phi_0)^2-(\partial_2\phi_0)^2\) in the sense of distributions in \(B_\sigma\). It follows from \eqref{eq:radial_defect_integrability}, \(H_\beta(r)\to H(0^+)\) and the statement \ref{item.Vcomparability} of Proposition \ref{prop:dyadic_J_V_estimates} that for every \(0<\rho<\sigma<1\),
    \[
        \nabla\phi_k\cdot x-D_0\phi_k \to0 \qquad\text{ strongly in }L^2(B_\sigma\setminus B_\rho).
    \]
    Since \(\phi_k\to\phi_0\) strongly in \(L^2(B_1)\), we obtain \(\nabla\phi_k\cdot x \to \nabla\phi_0\cdot x\) strongly in \(L^2(B_\sigma\setminus B_\rho)\). Note that if \(n=2\), the identities
    \[
        |\nabla\phi_k\cdot x|^2 = x_1^2(\partial_1\phi_k)^2 + x_2^2(\partial_2\phi_k)^2 + 2x_1x_2\partial_1\phi_k\partial_2\phi_k,
    \]
    and \((\partial_1\phi_k)^2-(\partial_2\phi_k)^2\) allow one to recover the distributional limits of \((\partial_1\phi_k)^2\) and \((\partial_2\phi_k)^2\) on every annulus \(B_\sigma\setminus B_\rho\). Thus \(|\nabla\phi_k|^2 \to |\nabla\phi_0|^2\) in the sense of distributions on \(B_\sigma\setminus B_\rho\). Since \(\nabla\phi_k\rightharpoonup\nabla\phi_0\) weakly in \(L^2\), convergence of the local \(L^2\)-norms implies \(\nabla\phi_k\to\nabla\phi_0\) strongly in \(L^2(B_\sigma\setminus B_\rho)\). Together with the strong \(L^2\)-convergence, this proves \(\phi_k\to\phi_0\) strongly in \(W^{1,2}(B_\sigma\setminus B_\rho)\). Since \(\rho\) and \(\sigma\) are arbitrary, we conclude that \(\phi_k\to\phi_0\) strongly in \(W^{1,2}_{\mathrm{loc}}(B_1\setminus\{0\})\).

    The proof that \(\phi_0\Delta\phi_0=0\) as a Radon measure in \(B_1\setminus\{0\}\) is also different from the one in \cite{VW12} due to our new structural assumptions on \(f\). We provide the details here. Let \(\eta\in C_c^\infty(B_1\setminus\{0\})\), \(\eta\ge0\). By the strong convergence just proved, \(\langle\Delta\phi_k,\eta\phi_k\rangle \to \langle\Delta\phi_0,\eta\phi_0\rangle\). Since \(\mu_k\) is supported on \(\partial\{\phi_k>0\}\), and \(\phi_k=0\) on this support, we have \(\langle\mu_k,\eta\phi_k\rangle=0\). Therefore
    \[
        \left| \langle\Delta\phi_k,\eta\phi_k\rangle \right| \le r_k^2 \int_{\operatorname{spt}\eta} \eta\,\phi_k^2 \frac{|f(A_k\phi_k)|}{A_k\phi_k} \,\dx{x},
    \]
    where the quotient is interpreted as \(0\) on \(\{\phi_k=0\}\). The positive part is estimated by
    \[
        r_k^2 \int_{\operatorname{spt}\eta} \eta\,\phi_k^2 \frac{f^+(A_k\phi_k)}{A_k\phi_k} \,\dx{x} \le Cr_k^2\mathsf P_0 \int_{\operatorname{spt}\eta}\phi_k^2\,\dx{x} \to0.
    \]
    For the negative part, Assumption \ref{assumption:nonlinearity} gives \(f^-(z)\le \mathsf N_0\, z(1+|\log z|),\) for \(0<z<z_0\). Thus
    \[
        r_k^2 \int_{\operatorname{spt}\eta} \eta\,\phi_k^2 \frac{f^-(A_k\phi_k)}{A_k\phi_k} \,\dx{x}
        \le Cr_k^2\mathsf N_0 \int_{\operatorname{spt}\eta} \phi_k^2 \bigl(1+|\log A_k|+|\log\phi_k|\bigr) \,\dx{x}.
    \]
    A direct calculation shows that 
    \[
        \frac{\dx{}}{\dx{r}}\,\log A(r)=\frac{D(r)}r, \qquad A(r):=\left(r^{-1}J(r)\right)^{1/2}. 
    \]
    Since \(D(r)\to D_0<+\infty\), we have 
    \[
        |\log A_k| \le C\left(1+\log\frac1{r_k}\right). 
    \]
    Moreover, in dimension two, the uniform \(W^{1,2}\)-bound gives a uniform \(L^p\)-bound for every finite \(p\), and the elementary inequality \(t^2|\log t|\le C_p(1+t^p)\), \(t\ge 0\), implies 
    \[
        \int_{\operatorname{spt}\eta}\phi_k^2|\log\phi_k|\,\dx{x}\le C.
    \]
    Therefore 
    \[
        r_k^2 \int_{\operatorname{spt}\eta} \eta\,\phi_k^2 \frac{f^-(A_k\phi_k)}{A_k\phi_k} \,\dx{x} \le C r_k^2\left(1+\log\frac1{r_k}\right) \to0.
    \]
    Hence \(\langle\Delta\phi_0,\eta\phi_0\rangle=0\) for every nonnegative \(\eta\in C_c^\infty(B_1\setminus\{0\})\). This is equivalent to \(\phi_0\Delta\phi_0=0\) as a Radon measure in \(B_1\setminus\{0\}\). The proof is complete.
\end{proof}

By Theorem \ref{thm:strong-convergence}, the vanishing of \(V(r)\), and the strong convergence in \(W^{1,2}_{\mathrm{loc}}(B_1\setminus\{0\})\), we may pass to the domain-variation identity in the upper half-plane. The classification argument of \cite[Theorem 9.1]{VW12} therefore applies verbatim: \(D(0^+)\) is an integer \(N(x^0)\ge2\), and every frequency-normalized blow-up is equal to the profile stated in Theorem \ref{thm:main}. In particular, the blow-up is unique and the convergence holds for the full family \(r\to0^+\).

Moreover, since \(H_\beta\ge0\) and \(V\ge0\),
\[
    D_{y,u}(r)\ge\frac32-r^\beta
\]
at every \(y\in\Sigma^u\), for all sufficiently small \(r\). On compact subsets, the admissible radius may be chosen uniformly. This estimate replaces \cite[Theorem 6.12(i)]{VW12} in the proof of \cite[Theorem 9.2]{VW12}, and hence \(\Sigma^u\) is locally finite in two dimensions. Finally, under the additional hypotheses of Theorem \ref{thm:main2}, the argument of \cite[Theorem 10.1]{VW12} excludes \(\Sigma^u\). This completes the proofs of Theorems \ref{thm:main} and \ref{thm:main2}.

\subsubsection*{Acknowledgement}
\hyphenpenalty=10
\sloppy
This work is supported by National Natural Science Foundation of China under Grants 12125102, 12526202, Nature Science Foundation of Guangdong Province under Grant 2024A1515012794, and Shenzhen Science and Technology Program (JCYJ20241202124209011).

\subsubsection*{Data availability} No data were used in this research.


\end{document}